\newcommand{\f}{\frac}
\newcommand{\ds}{\displaystyle}
 \newtheorem{thm}{Theorem}[section]
 \newtheorem{lem}[thm]{Lemma}
 \theoremstyle{definition}
 \theoremstyle{remark}
 \numberwithin{equation}{section}
\begin{document}

\title[Classification of $p$-groups via their $2$-nilpotent multipliers]
 {Classification of $p$-groups via their $2$-nilpotent multipliers}

\author[P. Niroomand]{Peyman Niroomand}
\address{School of Mathematics and Computer Science\\
Damghan University, Damghan, Iran}
\email{niroomand@du.ac.ir, p\underline~ niroomand@yahoo.com}
\author[Mohsen Parvizi]{Mohsen Parvizi}
\address{Department of Pure Mathematics, Ferdowsi University of Mashhad, Mashhad, Iran.}
\email{parvizi@math.um.ac.ir}

\thanks{\textit{Mathematics Subject Classification 2010.} 20C25, 20D15.}

%\subjclass{}

\keywords{Nilpotent multiplier, Schur multiplier, non abelian $p$-groups, 2-capable groups, capable groups, extra-special groups.}

%\date{\today}

%\dedicatory{}

\begin{abstract}
%% Text of abstract
For a $p$-group of order $p^n$, it is known that  the order of $2$-nilpotent multiplier is equal to $|\mathcal{M}^{(2)}(G)|=p^{\f12n(n-1)(n-2)+3-s_2(G)}$
for an integer $s_2(G)$. In this article, we  characterize all of non abelian $p$-groups satisfying in
$s_2(G)\in\{1,2,3\}.$ \end{abstract}

%%% ----------------------------------------------------------------------
\maketitle
%%% ----------------------------------------------------------------------
\section{Preliminaries}
The $2$-nilpotent multiplier of a $p$-group is a particular of the group $\mathcal{M}^{(c)}(G)$, which is Baer invariant of $G$,  introduced by Baer
in \cite{ba}, and may be found also in \cite{fr, ma}.
When $c=1$,  $\mathcal{M}^{(1)}(G)=\mathcal{M}(G)$ is well-known, and it is called the Schur multiplier of $G$.

The motivation survey of the $2$-nilpotent multiplier of $G$ comes from \cite{el2} that is connection to isologism of groups which is an
important instrument for classifying groups such as $p$-groups.

Recall from \cite{hal} a group as form $G\cong H/Z_2(H)$ is called $2$-capable. Choose a free presentation $G\cong F/R$, and consider the natural epimorphism $\alpha:F/[R,F,f]\rightarrow G$. We may define $Z_2^{*}(G)=\alpha(Z_2(F/[R,F,F]))$. \cite[Proposition 1.2]{el2} gives an instrument tools
to verify when $G$ is $2$-capable. More precisely, $G$ is $2$-capable if and only if $Z_2^{*}(G)=1$.

To be more precise on the order $\mathcal{M}^{(2)}(G)$ and the fact that the natural epimorphism $\mathcal{M}^{(2)}(G)\rightarrow \mathcal{M}^{(2)}(G/N)$
is a monomorphism for a normal subgroup $N$ of $Z_2^{*}(G)$ $($see \cite[Lemma 2.1]{el2}$)$ allow us to characterize which  $p$-group are $2$-capable.
For instance looking \cite{ni3} shows which of extra-special $p$-groups are $2$-capable.

A famous result of Green shows that for a given $p$-group of order $p^n$, $|\mathcal{M}(G)|=p^{\f12n(n-1)-t(G)}$ for $t(G)\geq 0$.
Several authors in \cite{be, zh, ni2, ni3} trying to classify the structure of $G$  in term of $t(G)$ up to $5$.
Restricting on non abelian groups in \cite{ni}, the Green's bound is improved to  $p^{\f12(n-1)(n-2)+1}$, and hence
 there is an integer $s(G)$ such that $|\mathcal{M}(G)|=p^{\f12(n-1)(n-2)+1-s(G)}$. The same result is proved in \cite{ni3} for  $2$-nilpotent multiplier of a group $G$. For a non abelian $p$-group of order $p^n$ there exists a integer $s_2(G)$ such that
 $|\mathcal{M}^{(2)}(G)|=p^{\f12n(n-1)(n-2)+3-s_2(G)}$, and the structure of all $p$-groups are classified when $s_2(G)=0$. By the same motivation in \cite{be, zh, ni2, ni3}, we are interesting
 to characterize $p$-groups up to isomorphisms when $s_2(G)\in\{1,2,3\}$.

Some theorems and lemmas are needed for the next investigation.
\begin{lem}\label{p1} Let $G$ be a finite group and $B\unlhd G$. Set $A = G/B$
\begin{itemize}
\item[(i)] \cite[Proposition 2]{el}

$(a)$. If $B\subseteq Z_2(G)$, then $|\mathcal{M}^{(2)}(G)||B\cap \gamma _{3}(G)|$ divides $|\mathcal{M}^{(2)}(A)|�|\ds(B\otimes \f{G}{\gamma_{3}(G)})\otimes
\f{G}{\gamma_{3}(G)}|�$,

$(b)$. \cite{lu} The sequence

 $(B\wedge G)\wedge G \rightarrow \mathcal{M}^{(2)}(G)\rightarrow\mathcal{M}^{(2)}(G/B)\rightarrow B\cap\gamma_3(G)\rightarrow 1$ is exact,
\item[(ii)]\cite{r1}
$|\mathcal{M}^{(2)}(A)|$ divides $|\mathcal{M}^{(2)}(G)| B\cap \gamma_{3}(G)|/|[[B,G],G]|$.
\end{itemize}
\end{lem}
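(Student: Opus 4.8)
The plan is to derive all three parts from one fixed free presentation together with the Hopf-type description of the $2$-nilpotent multiplier. I would choose $G\cong F/R$ and write $B=S/R$ with $R\subseteq S\unlhd F$, so that $A=G/B\cong F/S$. With this setup one has $\mathcal{M}^{(2)}(G)\cong (R\cap\gamma_3(F))/[R,F,F]$ and $\mathcal{M}^{(2)}(A)\cong (S\cap\gamma_3(F))/[S,F,F]$, and the natural map $\mathcal{M}^{(2)}(G)\rightarrow\mathcal{M}^{(2)}(A)$ is the one induced by the projection $F/[R,F,F]\rightarrow F/[S,F,F]$. Everything then reduces to a computation with nested quotients of subgroups of $F$.

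For part (i)(b) I would build the four-term sequence directly on these representatives. The rightmost term $B\cap\gamma_3(G)$ appears as $(S\cap\gamma_3(F))/\bigl((R\cap\gamma_3(F))[S,F,F]\bigr)$, which measures exactly the failure of the multiplier map to be onto, in complete analogy with the five-term sequence for the Schur multiplier but shifted one commutator degree higher. The kernel of $\mathcal{M}^{(2)}(G)\rightarrow\mathcal{M}^{(2)}(A)$ is $(R\cap\gamma_3(F))[S,F,F]/[R,F,F]$; rewriting this quotient through the nonabelian exterior product identifies the leftmost term with the image of $(B\wedge G)\wedge G$. Exactness at each spot is then a bookkeeping verification on these quotients.

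For part (i)(a) I would pass to orders in the sequence of (b). This yields $|\mathcal{M}^{(2)}(G)|\,|B\cap\gamma_3(G)|$ as a divisor of $|\mathcal{M}^{(2)}(A)|$ times the order of the image of $(B\wedge G)\wedge G$ in $\mathcal{M}^{(2)}(G)$. The hypothesis $B\subseteq Z_2(G)$ is precisely what lets me replace the exterior product by a tensor product: once $B$ is central modulo $\gamma_3(G)$ the relevant commutator pairings degenerate, so $(B\wedge G)\wedge G$ is a quotient of $(B\otimes\f{G}{\gamma_3(G)})\otimes\f{G}{\gamma_3(G)}$, and bounding by the order of the latter gives the stated divisibility.

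Part (ii) is a divisibility in the opposite direction, which I would extract from the companion exact sequence in which the term $[[B,G],G]$ occurs; dividing by $|[[B,G],G]|$ accounts for the part of the tensor contribution that already lies in the image of the multiplier of $G$. The main obstacle throughout is the clean handling of the functor $(-\wedge G)\wedge G$ and the central simplification in (a): one must check carefully that the commutator map really does collapse the exterior square to the tensor square under $B\subseteq Z_2(G)$, and that no relations are silently lost when passing to orders. Since each assertion is recorded in \cite{el,lu,r1}, the write-up would ultimately invoke those references, but the route above is how I would reconstruct them.
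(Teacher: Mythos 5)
The paper never proves Lemma \ref{p1}: all three parts are imported verbatim from the literature (\cite[Proposition 2]{el}, \cite{lu}, \cite{r1}), so your free-presentation reconstruction goes beyond what the paper does. Your route is the standard one behind those references, and it is consistent with the paper's own technique: the trilinear commutator argument you invoke in (a) to collapse $(B\wedge G)\wedge G$ to $(B\otimes G/\gamma_3(G))\otimes G/\gamma_3(G)$ when $B\subseteq Z_2(G)$ is precisely the computation the paper later writes out in full in the proof of Theorem \ref{our}, and your deduction of the divisibility in (a) by taking orders along the sequence in (b) is sound.

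Two slips need repair before the sketch is a proof. First, with $G=F/R$ and $B=S/R$, the kernel of $\mathcal{M}^{(2)}(G)\to\mathcal{M}^{(2)}(A)$ is $(R\cap[S,F,F])/[R,F,F]$, not $(R\cap\gamma_3(F))[S,F,F]/[R,F,F]$ as you wrote: under $B\subseteq Z_2(G)$ one has $[S,F,F]\subseteq R\cap\gamma_3(F)$, so your group is all of $\mathcal{M}^{(2)}(G)$ and exactness at $\mathcal{M}^{(2)}(G)$ would fail as stated; with the correct kernel, which then equals $[S,F,F]/[R,F,F]$, the image of $(B\wedge G)\wedge G$ does match it. Second, your identification of the cokernel with $B\cap\gamma_3(G)$ is not unconditional: in general $B\cap\gamma_3(G)\cong(S\cap\gamma_3(F))/(R\cap\gamma_3(F))$, while the cokernel is $(S\cap\gamma_3(F))/\bigl((R\cap\gamma_3(F))[S,F,F]\bigr)$, and these agree exactly when $[S,F,F]\subseteq R$, i.e.\ $B\subseteq Z_2(G)$; so part (b) also requires the hypothesis of item (i), which your sketch uses silently. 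Finally, your treatment of (ii) via an unspecified ``companion exact sequence'' is too thin, but it closes by pure order bookkeeping and needs no centrality at all: $|\mathcal{M}^{(2)}(G)|\,|B\cap\gamma_3(G)|=|(S\cap\gamma_3(F))/[R,F,F]|=|\mathcal{M}^{(2)}(A)|\cdot[\,[S,F,F]:[R,F,F]\,]$, and since $[S,F,F]$ maps onto $[[B,G],G]$ with kernel $R\cap[S,F,F]\supseteq[R,F,F]$, that index is a multiple of $|[[B,G],G]|$, which is exactly Moghaddam's divisibility.
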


The following theorem plays an essential role in the rest.

\begin{lem}\label{p2}\cite{r3}
Let $G$ be a finite group. Put $G^{ab}=G/G^{'}.$ Then there is a natural isomorphism
\[\mathcal{M}^{(2)}(G\times H)\cong
\mathcal{M}^{(2)}(G)\times\mathcal{M}^{(2)}(H)\times (G^{ab}\otimes G^{ab})
 \otimes H^{ab}\times (H^{ab}\otimes H^{ab})\otimes G^{ab}.\]
\end{lem}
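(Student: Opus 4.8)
The plan is to read both sides off a common free presentation and then peel away the two ``pure'' summands by a retraction argument, leaving only the genuinely mixed part to compute. Choose free presentations $G\cong F_1/R_1$ and $H\cong F_2/R_2$ and form the free product $F=F_1\ast F_2$, which is again free; then $G\times H\cong F/R$, where $R$ is the normal closure in $F$ of $R_1\cup R_2\cup[F_1,F_2]$. The standard free-presentation formula for the $2$-nilpotent multiplier gives $\mathcal{M}^{(2)}(G\times H)\cong(\gamma_3(F)\cap R)/[R,F,F]$, with the analogous formulas for $G$ and $H$ over $F_1,F_2$. Because the projection $G\times H\to G$ is split by the inclusion $G\hookrightarrow G\times H$ (and likewise for $H$), functoriality of $\mathcal{M}^{(2)}$ and the fact that the multiplier is abelian make $\mathcal{M}^{(2)}(G)$ and $\mathcal{M}^{(2)}(H)$ direct factors of $\mathcal{M}^{(2)}(G\times H)$. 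This already splits off $\mathcal{M}^{(2)}(G)\times\mathcal{M}^{(2)}(H)$ and reduces the theorem to identifying the complementary summand $C$ up to natural isomorphism with $(G^{ab}\otimes G^{ab})\otimes H^{ab}\times(H^{ab}\otimes H^{ab})\otimes G^{ab}$.

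To locate $C$ I would grade by the free-product structure. Writing $V_1=F_1^{ab}$ and $V_2=F_2^{ab}$, the associated graded $\bigoplus_n\gamma_n(F)/\gamma_{n+1}(F)$ is the free Lie ring on $V_1\oplus V_2$, and its weight-$3$ component splits by bidegree as $L_3(V_1)\oplus L_3(V_2)\oplus L_3^{(2,1)}\oplus L_3^{(1,2)}$. The pure components $L_3(V_i)$ are precisely what the retracts $\mathcal{M}^{(2)}(G),\mathcal{M}^{(2)}(H)$ consume, since $F_i$ is a retract of $F$ (so $\gamma_3(F)\cap F_1=\gamma_3(F_1)$ and $R\cap F_1=R_1$), while the mixed components carry $C$. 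As $[F_1,F_2]\subseteq R$, every weight-$3$ commutator of bidegree $(2,1)$ or $(1,2)$ already lies in $R$, so the mixed part of $\gamma_3(F)\cap R$ agrees modulo $\gamma_4(F)$ with $L_3^{(2,1)}\oplus L_3^{(1,2)}$. A bracket count, which I would verify on monomials using the Jacobi identity (to see $[[x,y],z]=[[x,z],y]-[[y,z],x]$ and hence surjectivity, with injectivity forced by the Witt dimension $\dim L_3^{(2,1)}=(\dim V_1)^2\dim V_2$), identifies these functorially: $L_3^{(2,1)}\cong V_1\otimes V_1\otimes V_2$ via $\bar x\otimes\bar y\otimes\bar z\mapsto[[x,z],y]$, and symmetrically $L_3^{(1,2)}\cong V_2\otimes V_2\otimes V_1$.

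The final and most delicate step is to analyse what the denominator $[R,F,F]$ does to this mixed part. In bidegree $(2,1)$ the numerator is the free module $F_1^{ab}\otimes F_1^{ab}\otimes F_2^{ab}$, and a weight count shows that any element of $[[R,F],F]$ of total weight $3$ must arise from a weight-one relator, i.e. from the images $\overline{R_1}\subseteq V_1$ and $\overline{R_2}\subseteq V_2$ (note $[F_1,F_2]$ contributes only in weight $\geq 2$). Commutator expansions such as $[[r_1,x],z]\equiv\overline{r_1}\otimes\bar x\otimes\bar z$ and $[[r_2,x],x']\equiv\bar x\otimes\bar x'\otimes\overline{r_2}$ then show that these relations quotient the two $V_1$-slots down to $G^{ab}=V_1/\overline{R_1}$ and the $V_2$-slot down to $H^{ab}=V_2/\overline{R_2}$, so the bidegree-$(2,1)$ summand of $C$ becomes $G^{ab}\otimes G^{ab}\otimes H^{ab}$, and the bidegree-$(1,2)$ summand becomes $H^{ab}\otimes H^{ab}\otimes G^{ab}$. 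The hard part will be exactly this bookkeeping: using the Hall--Witt identity and commutator collection to rule out any further weight-$3$ contributions of $[R,F,F]$ in these bidegrees, to confirm the displayed congruences modulo $\gamma_4(F)$, and to verify that passing to the quotient neither collapses the mixed summand into the pure retracts nor enlarges it, thereby cleanly disentangling the roles of $R_1$, $R_2$ and $[F_1,F_2]$. Once these congruences are pinned down, the bracket maps and the tensor product are manifestly natural in $G$ and $H$, and assembling the two pure retracts with the two mixed summands yields the asserted natural isomorphism.
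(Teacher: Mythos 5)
Your overall route---a common free presentation $F=F_1\ast F_2$ with $G\times H\cong F/R$ for $R$ the normal closure of $R_1\cup R_2\cup[F_1,F_2]$, splitting off $\mathcal{M}^{(2)}(G)\times\mathcal{M}^{(2)}(H)$ via the retractions, and computing the mixed part through the bidegree decomposition of the free Lie ring---is the standard one; note the paper itself gives no proof of this lemma, importing it from \cite{r3}, whose argument runs along essentially these lines. But as written your proof has a genuine gap, and it sits exactly where you yourself say ``the hard part will be'': all of your identifications of the mixed summand are congruences modulo $\gamma_4(F)$, whereas the group to be computed is $(\gamma_3(F)\cap R)/[R,F,F]$, and $\gamma_4(F)\cap R$ is \emph{not} contained in $[R,F,F]$. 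Indeed most of $\mathcal{M}^{(2)}(G)$ itself lives in weights $\geq 4$: it is $(\gamma_3(F_1)\cap R_1)/[R_1,F_1,F_1]$, not a subquotient of $L_3(V_1)$, so your phrase ``the pure components $L_3(V_i)$ are precisely what the retracts consume'' conflates the bidegree splitting (correct) with a weight-$3$ truncation (false for the pure part, and unproved for the mixed part). Passing from the associated graded picture to the actual quotient is precisely the content of the lemma, and your sketch defers it rather than closing it.

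Concretely, what is missing is the following. Set $T=[F_1,F_2]^F$, so that $R=R_1R_2T$ and the mixed material is carried by $T$. One must prove (i) $T\cap\gamma_3(F)=[T,F]$ and (ii) $T\cap\gamma_4(F)=[T,F,F]\subseteq[R,F,F]$, so that the mixed summand really is concentrated in weight $3$ modulo $[R,F,F]$; both follow from rank counts in the free Lie ring, since $T/[T,F]\cong V_1\otimes V_2$ and $[T,F]/[T,F,F]\cong(V_1\otimes V_2)\otimes(V_1\oplus V_2)$, whose rank $d_1d_2(d_1+d_2)$ equals the rank $d_1^2d_2+d_1d_2^2$ of the mixed weight-$3$ component of $L_3(V_1\oplus V_2)$, forcing the graded comparison maps to be injective. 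With (ii) in hand your displayed congruences become equalities in $\mathcal{M}^{(2)}(G\times H)$; the well-definedness and multilinearity of $(\bar x,\bar y,\bar z)\mapsto[[x,z],y]\,[R,F,F]$ then follow by commutator calculus of the same kind as in the proof of Theorem \ref{our}, using $[x,z]\in R$ and $[R,\gamma_2(F)]\subseteq[R,F,F]$ (three subgroups lemma); and the four summands assemble into a direct product because the two retractions together with the projection $F\to F/T\cong F_1\times F_2$ provide a two-sided inverse to the natural map from the product of the four pieces. Until (i)--(ii), or an equivalent order-counting argument for these finite groups, is supplied, your argument establishes only the associated-graded statement, not the claimed natural isomorphism.
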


\begin{lem}\label{p3}
Let $G$ be an extra-special $p$-group of order $p^{2n+1}$.
\begin{itemize}
\item[(i)] If $n>1$,then $\mathcal{M}^{(2)}(G)$ is an elementary abelian $p$-group of order $p^{\frac13(8n^3-2n)}$.
\item[(ii)] Suppose that $|G|=p^3$ and $p$ is odd. Then $\mathcal{M}^{(2)}(G)=\mathbb{Z}_p^{(5)}$ if $G$ is of exponent $p$ and $\mathcal{M}^{(2)}(G)=\mathbb{Z}_p\times \mathbb{Z}_p$ if $G$ is of exponent $p^2$.
\item[(iii)] The quaternion group of order 8 has Klein four-group as the 2-nilpotent multiplier, whereas the 2-nilpotent multiplier of the dihedral group of order 8 is $\mathbb{Z}_2\oplus \mathbb{Z}_4$.
\end{itemize}
\end{lem}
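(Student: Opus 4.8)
The plan is to reduce everything to a computation in a free presentation and to compare $G$ with its abelianization. Fix a minimal free presentation $G\cong F/R$ with $F$ free of rank $d=2n$, so that $G^{ab}\cong\mathbb{Z}_p^{(2n)}$. Since an extra-special group has nilpotency class $2$ we have $\gamma_3(G)=1$, hence $\gamma_3(F)\subseteq R$, and the Baer-invariant description of the $2$-nilpotent multiplier collapses to
\[
\mathcal{M}^{(2)}(G)\cong\frac{R\cap\gamma_3(F)}{[R,F,F]}=\frac{\gamma_3(F)}{[R,F,F]}.
\]
Applying the same formula with $R_0=F'F^p$, the relator subgroup of $G^{ab}$, gives $\mathcal{M}^{(2)}(G^{ab})\cong\gamma_3(F)/[R_0,F,F]$; since $R\subseteq R_0$ there is a natural surjection $\mathcal{M}^{(2)}(G)\to\mathcal{M}^{(2)}(G^{ab})$, which is exactly the epimorphism furnished by Lemma~\ref{p1}(i)(b) for $B=G'=Z(G)$ (note that $B\cap\gamma_3(G)=1$, so the term $B\cap\gamma_3(G)$ drops out).

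First I would settle the abelian target. Because $F'\subseteq R_0$ one has $\gamma_4(F)=[F',F,F]\subseteq[R_0,F,F]$, and for $p$ odd the power relators contribute precisely the $p$-th powers in $\gamma_3(F)/\gamma_4(F)$; hence
\[
\mathcal{M}^{(2)}(\mathbb{Z}_p^{(2n)})\cong(\gamma_3(F)/\gamma_4(F))\otimes\mathbb{F}_p,
\]
a free $\mathbb{F}_p$-module whose rank is the number of basic commutators of weight $3$ on $2n$ letters, namely $\tfrac13\big((2n)^3-2n\big)=\tfrac13(8n^3-2n)$ by Witt's formula. This is elementary abelian of the asserted order; for $p\in\{2,3\}$ the same conclusion requires bookkeeping of the power identities, which is the issue flagged below.

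The crux of (i) is then to show the above surjection is an isomorphism when $n>1$, equivalently $[R_0,F,F]=[R,F,F]$, equivalently $\gamma_4(F)\subseteq[R,F,F]$ (note that, unlike for $R_0$, we do \emph{not} have $F'\subseteq R$, so this must be proved rather than read off). I would argue in the weight-graded quotient $\gamma_4(F)/\gamma_5(F)$ using the explicit relators of $G$: for a symplectic generating set $x_1,\dots,x_{2n}$ with $[x_{2i-1},x_{2i}]=z$ and all other basic commutators of the $x_j$ trivial, $R$ contains the non-symplectic relators $[x_j,x_k]$ and the relators $[x_{2i-1},x_{2i}][x_1,x_2]^{-1}$. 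Consequently every weight-$4$ commutator based on a non-symplectic pair already lies in $[R,F,F]$, while the relators $[x_{2i-1},x_{2i}][x_1,x_2]^{-1}$ identify all symplectic bases with $[x_1,x_2]$; it then remains only to annihilate the commutators $[[x_1,x_2],x_c,x_d]$. Applying the Jacobi/Hall--Witt identity together with the identification $[x_1,x_2]\equiv[x_3,x_4]$ (available precisely because $n>1$ supplies a second symplectic pair), each such commutator can be routed through a cross pair such as $[x_1,x_3]$, which is non-symplectic and hence a relator, so it vanishes modulo $[R,F,F]$. This is the step I expect to be the main obstacle, since one must check that this rewriting disposes of \emph{all} weight-$4$ basic commutators, including those of the form $[[x_a,x_b],[x_c,x_d]]$, and that the higher terms $\gamma_5(F),\dots$ are then automatically absorbed into $[R,F,F]$. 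When $n=1$ no second symplectic pair exists and the routing breaks down, which is exactly what forces the case distinction.

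Finally, for the low-order groups in (ii) and (iii) I would compute $\gamma_3(F)/[R,F,F]$ directly from explicit presentations. For $|G|=p^3$ with $p$ odd the abelian target already has order $p^2$; for the exponent-$p^2$ group the surviving weight-$4$ contributions vanish, giving $\mathbb{Z}_p\times\mathbb{Z}_p$, whereas for the exponent-$p$ Heisenberg group the three commutators of type $[[x_1,x_2],x_1,x_2]$ survive (the obstruction isolated above), yielding $\mathbb{Z}_p^{(5)}$. For $Q_8$ and $D_8$ the same direct computation applies, the only extra care being the $p=2$ power identity $[x^2,y]\equiv[x,y]^2[x,y,x]$; this is what produces the non-elementary-abelian summand $\mathbb{Z}_4$ for $D_8$, while in $Q_8$ the corresponding terms cancel and one obtains the Klein four-group.
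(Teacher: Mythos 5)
First, a point of orientation: the paper you are working from contains \emph{no} proof of Lemma \ref{p3} at all --- it is imported verbatim from the authors' earlier work \cite{ni3} (Glasg.\ Math.\ J.\ 57 (2015)), where parts (i)--(iii) are established by explicit computation and central-product inductions. So your attempt is necessarily an independent route, and for part (i) it is a sound one that can be completed exactly as you sketch. Your reduction is correct: with a minimal presentation $F/R$, $F$ free of rank $2n$, class two gives $\gamma_3(F)\subseteq R$, hence $\mathcal{M}^{(2)}(G)=\gamma_3(F)/[R,F,F]$ and also $\gamma_5(F)=[\gamma_3(F),F,F]\subseteq[R,F,F]$, so your whole problem lives in $\gamma_4(F)/\gamma_5(F)$ --- this settles the ``higher terms'' you flagged. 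Since $\Phi(G)=G'$ forces $F^p\subseteq RF'$, one gets $R_0=RF'$ and $[R_0,F,F]=[R,F,F]\gamma_4(F)$, so $\gamma_4(F)\subseteq[R,F,F]$ is indeed equivalent to your surjection being an isomorphism. And the routing closes up: for any index $c$ pick a symplectic pair $(a,b)$ with $c\notin\{a,b\}$ (possible precisely when $n\geq 2$), replace $[x_1,x_2]$ by $[x_a,x_b]$ modulo $R$, and the Jacobi identity rewrites $[[x_a,x_b],x_c]$ modulo $\gamma_4(F)$ in terms of $[[x_a,x_c],x_b]$ and $[[x_b,x_c],x_a]$, whose inner pairs are non-symplectic relators; bracketing with any $x_d$ then lands in $[R,F,F]\gamma_5(F)$. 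The commutators $[[x_a,x_b],[x_c,x_d]]$ you worried about die because $[F',R]\subseteq[R,F,F]$ by the Three Subgroup Lemma. Your caution about $p\in\{2,3\}$ in the abelian benchmark is also unnecessary: since $F'\subseteq R_0$, every Hall--Petrescu correction to $[x^p,f,f']\equiv[x,f,f']^p$ lies in $\gamma_4(F)\subseteq[R_0,F,F]$, for every prime; alternatively, iterating Lemma \ref{p2} of the paper gives at once that $\mathcal{M}^{(2)}(\mathbb{Z}_p^{(d)})$ is elementary abelian of rank $\frac{1}{3}(d^3-d)$, with no bookkeeping.

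The genuine gap is in (ii) and (iii): there you do not prove anything, you predict the outcome of ``direct computations'' that are never performed, and these are exactly the cases where your graded mechanism fails (one symplectic pair, so nothing forces $\gamma_4(F)$ into $[R,F,F]$) and where the answer genuinely depends on the power relators. Concretely, what remains to be shown is: for the exponent-$p$ group, that the three weight-$4$ basic commutators $[y,x,x,x]$, $[y,x,x,y]$, $[y,x,y,y]$ survive independently with order exactly $p$, which requires controlling the tails of $[x^p,f,f']$ --- the weight-$4$ terms there carry exponent $\binom{p}{2}$, so oddness of $p$ is genuinely used --- and verifying $\gamma_4(F)\cap[R,F,F]=\gamma_4(F)^p\gamma_5(F)(\cdots)$; for the exponent-$p^2$ group, that the relator identifying $[x,y]$ with $x^p$ yields $[[x,y],f,f']\equiv[x,f,f']^{p}\pmod{[R,F,F]\gamma_5(F)}$, which is the actual mechanism collapsing weight $4$ into $p$-th powers of weight $3$ and producing $\mathbb{Z}_p\times\mathbb{Z}_p$ --- your sketch names the conclusion but not this relation; and for $p=2$, that the relation $[x,f,f']^2[x,f,x,f']\in[R,F,F]$ coming from $x^2\in R$ welds a weight-$3$ and a weight-$4$ generator into a $\mathbb{Z}_4$ for $D_8$, while for $Q_8$ the relators $x^2[x,y]^{-1}$-type force $\gamma_4(F)\subseteq[R,F,F]$ even though no second symplectic pair exists (note $|\mathcal{M}^{(2)}(Q_8)|=4$ equals the abelian benchmark, so for $Q_8$ your surjection is an isomorphism for a reason your routing argument cannot supply). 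None of this is difficult in a rank-$2$ free group, but as written these are assertions, not proofs; until they are executed, your argument establishes part (i) only.
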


\begin{lem}\label{p4} Let $G=\mathbb{Z}_{p^{m_1}}\oplus \mathbb{Z}_{p^{m_2}}\oplus\cdots\oplus
\mathbb{Z}_{p^{m_k}}$, where $m_1\geq m_2\geq \cdots\geq m_k $ and $\ds\Sigma_{i=1}^km_i=n$. Then
\begin{itemize}
\item[(i)] $|\mathcal{M}^{(2)}(G)|=p^{\f13n(n-1)(n+1)}$ if and only if~ $m_i=1$ for all $i$.
\item[(ii)] $|\mathcal{M}^{(2)}(G)|\leq p^{\f13n(n-1)(n-2)}$ if and only if~ $m_1\geq 2$.
\end{itemize}
\end{lem}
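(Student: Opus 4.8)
The plan is to reduce everything to an explicit formula for $\log_p|\mathcal{M}^{(2)}(G)|$ obtained by iterating Lemma \ref{p2}. First I would record two elementary facts: a cyclic $p$-group has $\mathcal{M}^{(2)}(\mathbb{Z}_{p^m})=1$ (a one-generator free presentation is abelian, so $\gamma_3$ vanishes), and $\mathbb{Z}_{p^a}\otimes\mathbb{Z}_{p^b}\cong\mathbb{Z}_{p^{\min(a,b)}}$. Writing $G=\mathbb{Z}_{p^{m_1}}\oplus H$ with $H=\bigoplus_{i\ge 2}\mathbb{Z}_{p^{m_i}}$ and applying Lemma \ref{p2} (all factors abelian, so $G^{ab}=G$ etc.), the $\mathcal{M}^{(2)}(\mathbb{Z}_{p^{m_1}})$ term drops, and, using $m_1\ge m_i$ to evaluate the tensor factors, I get the recursion
\[
\log_p|\mathcal{M}^{(2)}(G)|=\log_p|\mathcal{M}^{(2)}(H)|+n'+S,\qquad n'=\sum_{i\ge 2}m_i,\ \ S=\sum_{i,j\ge 2}\min(m_i,m_j).
\]

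Next I would solve this recursion in closed form. Induction on the number $k$ of cyclic factors (peeling off the largest part at each stage) gives
\[
\log_p|\mathcal{M}^{(2)}(G)|=\sum_{a=1}^{k}a(a-1)\,m_a ,
\]
the only thing to verify being the identity $n'+S=2\sum_{b\ge 2}(b-1)m_b$, which follows by counting, for each $b$, the ordered pairs $(i,j)$ with $\max(i,j)=b$. Regarding the $m_a$ as the row lengths of a Young diagram and summing the weight $a(a-1)$ over boxes column-by-column, this rewrites through the conjugate partition $N_t=\#\{a:m_a\ge t\}$ as
\[
\log_p|\mathcal{M}^{(2)}(G)|=\tfrac13\Big(\sum_{t}N_t^3-n\Big),\qquad \sum_t N_t=n .
\]

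With this formula both parts become an optimization of $\sum_t N_t^3$ over partitions $(N_t)$ of $n$. The engine is the strict exchange inequality $(N_s+1)^3+(N_t-1)^3>N_s^3+N_t^3$ whenever $N_s\ge N_t\ge 1$, i.e. moving one unit onto a largest part strictly increases $\sum_t N_t^3$. For (i), concentrating everything into a single part $(n)$ is the unique maximizer, giving $\tfrac13(n^3-n)=\tfrac13 n(n-1)(n+1)$; a single part of the conjugate corresponds exactly to $\lambda=1^n$, i.e. all $m_i=1$, which settles both directions. For (ii), the condition $m_1\ge 2$ is precisely that the conjugate partition has at least two parts, so the constrained maximizer is $(n-1,1)$, giving $\tfrac13\big((n-1)^3+1-n\big)=\tfrac13 n(n-1)(n-2)$; the reverse direction uses that $\lambda=1^n$ yields the strictly larger value $\tfrac13 n(n-1)(n+1)$.

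The main obstacle is reaching the closed form cleanly: the inductive step hinges on correctly evaluating the three tensor contributions in Lemma \ref{p2} under the ordering $m_1\ge\cdots\ge m_k$, and on the combinatorial identity $n'+S=2\sum_{b\ge 2}(b-1)m_b$. Once the conjugate-partition expression $\tfrac13(\sum_t N_t^3-n)$ is in hand, the convexity (exchange) argument makes both extremal characterizations, together with their strictness, essentially immediate, modulo the harmless degenerate case $n=1$.
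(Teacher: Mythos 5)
Your proposal is correct, and it takes a genuinely different route from the paper, which in fact offers no proof of Lemma \ref{p4} at all: the lemma is imported from the literature (it is the order-level consequence of the classical structure theorem, going back to Moghaddam's work on Baer invariants of direct products, that for $G=\mathbb{Z}_{p^{m_1}}\oplus\cdots\oplus\mathbb{Z}_{p^{m_k}}$ with $m_1\geq\cdots\geq m_k$ one has $\mathcal{M}^{(2)}(G)\cong\bigoplus_{i=2}^{k}\mathbb{Z}_{p^{m_i}}^{(\chi_3(i)-\chi_3(i-1))}$, where $\chi_3(i)=\tfrac13(i^3-i)$ is the Witt count of basic commutators of weight $3$ on $i$ letters, so the multiplicity is $i(i-1)$). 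Your induction on Lemma \ref{p2} reproves exactly this at the level of orders: I checked that your recursion constants are right, since $S=\sum_{b\geq2}(2b-3)m_b$ (counting ordered pairs with $\max(i,j)=b$), whence $n'+S=2\sum_{b\geq2}(b-1)m_b$, and the closed form $\sum_a a(a-1)m_a$ agrees with the basic-commutator multiplicities above. What your approach buys is the extremal analysis: the conjugate-partition rewriting $\tfrac13\bigl(\sum_t N_t^3-n\bigr)$ together with the strict exchange inequality (which holds since $(N_s+1)^3+(N_t-1)^3-N_s^3-N_t^3=3(N_s+N_t)(N_s-N_t+1)>0$ for $N_s\geq N_t\geq1$) handles both parts (i) and (ii), with strictness, in one uniform convexity argument, rather than by comparing $\sum_a a(a-1)m_a$ against the two candidate values case by case; the classical route, by contrast, yields the full isomorphism type and not merely the order, which the paper exploits elsewhere. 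Two small points of care: in (ii) your exchange step must preserve the constraint of at least two conjugate parts, which works because the only configuration from which a move would destroy the constraint is $(n-1,1)$ itself, the claimed maximizer; and the degenerate failure of the ``only if'' in (ii) at $n=1$ is genuine but equally present in the paper's formulation, where the lemma is only ever applied with $n\geq2$.
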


\section{Main Results}
We know that the order of the $2$-nilpotent multiplier of a finite non abelian $p$-group of order $p^n$ is bounded by $p^{\frac13n(n-1)(n-2)+3}$, therefore for any group $G$ there exists a non negative integer $s_{_2}(G)$ for which $|\mathcal{M}^{(2)}(G)|=p^{\frac13n(n-1)(n-2)+3-s_{_2}(G)}$. In this paper, we characterize the explicit structures of finite non abelian $p$-groups when $s_{_2}(G)\in \{1,2,3\}$. It must be noted that the group with $s_{_2}(G)=0$ are completely determined in \cite{ni3} as follows.

\begin{thm}
Let $G$ be a non abelian finite $p$-group of order $p^n$ whose derived subgroup is of order $p$. Then $|\mathcal{M}^{(2)}(G)|\leq
p^{\f13n(n-1)(n-2)+3}$, and the
equality holds if and only if $G\cong E_1\times \mathbb{Z}_p^{(n-3)}$.
\end{thm}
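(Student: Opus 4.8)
The plan is to compare $\mathcal{M}^{(2)}(G)$ with the multiplier of the abelian quotient $G^{ab}=G/G'$, which has order $p^{n-1}$ and is governed by Lemma~\ref{p4}. Since $|G'|=p$ forces $G$ to have class $2$, we have $G'\le Z(G)$ and $\gamma_3(G)=[G',G]=1$, so in particular $G'\cap\gamma_3(G)=1$. Taking $B=G'$ in the exact sequence of Lemma~\ref{p1}(i)(b) then produces a surjection $\mathcal{M}^{(2)}(G)\twoheadrightarrow\mathcal{M}^{(2)}(G^{ab})$ whose kernel is a homomorphic image of $(G'\wedge G)\wedge G$. Hence
\[ |\mathcal{M}^{(2)}(G)|\le|\mathcal{M}^{(2)}(G^{ab})|\cdot|(G'\wedge G)\wedge G|, \]
and by Lemma~\ref{p4}(i) the first factor is at most $p^{\frac13 n(n-1)(n-2)}$, with equality precisely when $G^{ab}$ is elementary abelian. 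The whole problem thus splits according to the structure of $G^{ab}$ and the exponent of $G$.

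For the sharp constant I would treat first the case where $G^{ab}$ is elementary abelian and $G$ has exponent $p$ (for $p$ odd; $p=2$ is dealt with through Lemma~\ref{p3}(iii)). Then $\Phi(G)=G'$ and $Z(G)$ is elementary abelian, and choosing a symplectic basis of $G/Z(G)$ for the nondegenerate alternating form induced by commutation lets me split $G\cong E\times\mathbb{Z}_p^{(r)}$, where $E$ is extra-special of order $p^{2m+1}$ and exponent $p$ and $r=n-2m-1$. On such a direct product Lemma~\ref{p2} computes the multiplier outright, and combining it with Lemma~\ref{p3} and Lemma~\ref{p4}(i) gives
\[ \log_p|\mathcal{M}^{(2)}(E\times\mathbb{Z}_p^{(r)})|=\mu(m)+\tfrac13 r(r^2-1)+4m^2r+2mr^2, \]
where $\mu(1)=5$ and $\mu(m)=\tfrac13(8m^3-2m)$ for $m\ge2$. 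A direct expansion shows the right-hand side equals $\frac13 n(n-1)(n-2)+3$ when $m=1$ and is strictly smaller for all $m\ge2$; so the maximum over this family is attained exactly at $m=1$, $E=E_1$, that is at $G\cong E_1\times\mathbb{Z}_p^{(n-3)}$. This both certifies sharpness of the bound and pins down the extremal group.

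It remains to rule out every other $G$. If $G^{ab}$ is not elementary abelian, Lemma~\ref{p4}(ii) lowers the abelian factor to at most $p^{\frac13(n-1)(n-2)(n-3)}$, i.e.\ by $p^{(n-1)(n-2)}$, so that even a crude estimate for the defect $(G'\wedge G)\wedge G$ forces a strict inequality. If $G^{ab}$ is elementary abelian but $G$ has exponent $p^2$, then $G$ need not split as a direct product: an element of order $p^2$ can enter through a central $\mathbb{Z}_{p^2}$ amalgamated along $G'$, or through an exponent-$p^2$ extra-special factor. In either situation the count strictly drops---Lemma~\ref{p3}(ii) replaces the value $p^5$ by $p^2$ in the order-$p^3$ core, and a central $\mathbb{Z}_{p^2}$ contributes less than the corresponding $\mathbb{Z}_p\times\mathbb{Z}_p$---so equality forces exponent $p$. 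Together with the optimization above, equality therefore holds only for $G\cong E_1\times\mathbb{Z}_p^{(n-3)}$, and the converse computation already recorded in the second step shows it does hold there.

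The principal obstacle is the control of the defect term $(G'\wedge G)\wedge G$. The construction in Lemma~\ref{p1}(i)(b) is the weight-three analogue of the ordinary exterior square and is \emph{not} the naive iterated exterior product, so any bound on it (or on its image in $\mathcal{M}^{(2)}(G)$) must be extracted from the explicit descriptions in \cite{lu,el}; obtaining the tight value $p^3$ in the extremal case, and merely a sufficiently small crude bound otherwise, is the delicate point. A secondary difficulty is the family of exponent-$p^2$ groups with elementary abelian abelianization: these lie outside the direct-product reach of Lemma~\ref{p2}, so they must be handled by the exact sequence or an ad hoc decomposition, and it is precisely here that one must confirm that no such group unexpectedly competes with $E_1\times\mathbb{Z}_p^{(n-3)}$.
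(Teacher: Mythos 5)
Your overall architecture is the right one, and in fact parallels how this theorem is actually established (the paper quotes it from \cite{ni3}; the machinery that reproves it here is Theorem \ref{our} together with Lemmas \ref{p2}, \ref{p3}, \ref{p4} and Theorems \ref{2.8}, \ref{2.9}, \ref{cs}): split on whether $G/G'$ is elementary abelian, compute the extremal family $E\times\mathbb{Z}_p^{(r)}$ exactly by the direct-product formula, and push everything else strictly below the bound. Your count $\mu(m)+\frac13 r(r^2-1)+4m^2r+2mr^2$ is correct and does peak, with value $\frac13 n(n-1)(n-2)+3$, exactly at $m=1$, i.e.\ at $E_1\times\mathbb{Z}_p^{(n-3)}$; note also that you do \emph{not} need the tight value $p^3$ for the defect term in the extremal case, since there the order is computed outright from Lemmas \ref{p2} and \ref{p3}, so one of the two difficulties you list at the end is illusory.

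The other difficulty is a genuine gap, and it is load-bearing: both of your remaining cases hinge on a quantitative bound for the defect that you never obtain. In the non-elementary-abelian case you need the image of $(G'\wedge G)\wedge G$ in $\mathcal{M}^{(2)}(G)$ to be at most roughly $p^{(n-2)^2}$, and, as you yourself observe, this does not follow formally from Lemma \ref{p1}(i)(b) because the weight-three term is not the naive iterated exterior product; in the exponent-$p^2$ case the phrase ``a central $\mathbb{Z}_{p^2}$ contributes less than the corresponding $\mathbb{Z}_p\times\mathbb{Z}_p$'' is an assertion, not an argument, and Lemma \ref{p2} is unavailable for the central product $H\cdot\mathbb{Z}_{p^2}\times T$. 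The paper closes both holes with a single tool you lack: Theorem \ref{our}, proved from scratch by commutator calculus in a free presentation $G=F/R$, $B=S/R$, which exhibits an epimorphism $B\otimes G/B\otimes G/G'\twoheadrightarrow [S,F,F]/[R,F,F]$ for any \emph{cyclic central} $B$ (cyclicity enters through $R\cap S'=[R,S]$, hence $S'=[R,S]$). Taking $B=G'$ --- cyclic of order $p$ and central since $G$ has class $2$ --- yields precisely the crude bound your case (b) needs (this is Lemma \ref{nl}); taking $B=\mathbb{Z}_{p^2}$ yields the upper bound $p^{\frac13 n(n-1)(n-2)}$ for $H\cdot\mathbb{Z}_{p^2}$, matched from below by the surjection $\mathcal{M}^{(2)}(G)\twoheadrightarrow\mathcal{M}^{(2)}(G^{ab})$ that you already extracted from Lemma \ref{p1}(i)(b) with $\gamma_3(G)=1$ (this two-sided squeeze is Theorems \ref{2.9} and \ref{cs}). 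Until you either prove such an epimorphism yourself or extract the corresponding bound from \cite{lu,el}, neither the global inequality $|\mathcal{M}^{(2)}(G)|\leq p^{\frac13 n(n-1)(n-2)+3}$ nor the exclusion of the exponent-$p^2$ competitors is established, so the proposal as written proves only the equality direction.
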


First we state the following theorem from \cite{ni3} to prove the only groups which may have the desired property are those with small derived subgroups.

\begin{thm}\label{1} Let $G$ be a $p$-group of order $p^n$ with $|G^{'}|=p^{m} (m\geq 1)$. Then
\[|\mathcal{M}^{(2)}(G)|\leq p^{\f13(n-m)\big((n+2m-2)(n-m-1)+3(m-1)\big)+3}~ .\]
\end{thm}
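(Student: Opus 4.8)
The plan is to argue by induction on $m$, where $p^m=|G'|$. The base case $m=1$ is exactly the preceding theorem, which bounds $|\mathcal{M}^{(2)}(G)|$ by $p^{\f13n(n-1)(n-2)+3}$ for a non abelian group with $|G'|=p$. Writing $k=n-m$, a direct expansion shows that the claimed exponent
\[
E(n,m):=\f13(n-m)\big((n+2m-2)(n-m-1)+3(m-1)\big)+3=\f13k(k^2-1)+(m-1)k^2+3
\]
satisfies $E(n,1)=\f13n(n-1)(n-2)+3$, so the base case is covered. The engine of the induction is the arithmetic identity $E(n,m)-E(n-1,m-1)=k^2=(n-m)^2$, which holds because passing from $(n,m)$ to $(n-1,m-1)$ leaves $k=n-m$ unchanged while lowering the coefficient of $k^2$ by one. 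Thus it will suffice to show that reducing $G$ by a central subgroup of order $p$ lying in $G'$ costs at most a factor $p^{(n-m)^2}$ in the order of the $2$-nilpotent multiplier.

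For the inductive step I assume $m\geq 2$ and choose a subgroup $Z\leq G'\cap Z(G)$ of order $p$; such a $Z$ exists because $G'$ is a nontrivial normal subgroup of the $p$-group $G$ and hence meets $Z(G)$ nontrivially. Then $A:=G/Z$ has order $p^{n-1}$ and derived subgroup $G'/Z$ of order $p^{m-1}$, so $A$ is again non abelian with the same value of $k$, and the inductive hypothesis gives $|\mathcal{M}^{(2)}(A)|\leq p^{E(n-1,m-1)}$. Since $Z\leq Z(G)\leq Z_2(G)$, Lemma \ref{p1}(i)(a) applies with $B=Z$ and yields that $|\mathcal{M}^{(2)}(G)|\,|Z\cap\gamma_3(G)|$ divides
\[
|\mathcal{M}^{(2)}(A)|\cdot\bigl|(Z\otimes\f{G}{\gamma_3(G)})\otimes\f{G}{\gamma_3(G)}\bigr|,
\]
whence $|\mathcal{M}^{(2)}(G)|\leq|\mathcal{M}^{(2)}(A)|\cdot\bigl|(Z\otimes\f{G}{\gamma_3(G)})\otimes\f{G}{\gamma_3(G)}\bigr|$.

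The crux, and the step I expect to require the most care, is the estimate $\bigl|(Z\otimes\f{G}{\gamma_3(G)})\otimes\f{G}{\gamma_3(G)}\bigr|\leq p^{(n-m)^2}$. Here each tensor factor $\f{G}{\gamma_3(G)}$ may be replaced by its abelianization $(\f{G}{\gamma_3(G)})^{ab}=G/G'$, an abelian $p$-group of order $p^{n-m}$ and hence of rank $d\leq n-m$. Since $Z\cong\mathbb{Z}_p$ is central, $Z\otimes(G/G')\cong (G/G')/p(G/G')$ is elementary abelian of rank $d$; tensoring once more with $G/G'$ then produces a group of order $p^{d^2}\leq p^{(n-m)^2}$. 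Feeding this into the displayed divisibility and combining with the inductive hypothesis gives
\[
|\mathcal{M}^{(2)}(G)|\leq p^{E(n-1,m-1)}\cdot p^{(n-m)^2}=p^{E(n-1,m-1)+k^2}=p^{E(n,m)},
\]
completing the induction. The genuinely delicate point is pinning down the tensor estimate sharply, that is, justifying the reduction to abelianizations within the tensor formalism and controlling the two successive ranks by $n-m$; by contrast the existence of $Z$, the invocation of Lemma \ref{p1}, and the bookkeeping identity for $E(n,m)$ are all routine.
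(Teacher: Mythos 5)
Your proof is correct, but there is nothing in this paper to compare it against: Theorem \ref{1} is quoted from \cite{ni3} without proof, and your induction on $m$ --- peeling off a central subgroup $Z\leq G'\cap Z(G)$ of order $p$, applying Lemma \ref{p1}(i)(a) to $G/Z$, and absorbing the cost via the bookkeeping identity $E(n,m)-E(n-1,m-1)=(n-m)^2$, with the paper's preceding $|G'|=p$ theorem as base case --- is essentially the standard argument behind such bounds, mirroring the proof of the Schur-multiplier prototype in \cite{ni}. The step you flagged as delicate is indeed sound: since $Z$ is central, both conjugation actions in the tensor products are trivial, so each factor $G/\gamma_3(G)$ may be replaced by its abelianization $G/G'$, and then $|(\mathbb{Z}_p\otimes G/G')\otimes G/G'|=p^{d^2}\leq p^{(n-m)^2}$, where $d\leq n-m$ is the rank of $G/G'$, exactly as you argued.
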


Using the above theorem we have

\begin{lem}\label{m>=3}
Let $G$ be a non abelian $p$-group of order $p^n$ with $|G'|\geq p^3$. Then $|\mathcal{M}^{(2)}(G)|\leq
p^{\f13n(n-1)(n-2)-2}$
\end{lem}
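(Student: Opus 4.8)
The plan is to apply Theorem~\ref{1} with $|G'|=p^m$ and $m\geq 3$, and then to check that the bound it supplies is already no larger than the target $p^{\frac13 n(n-1)(n-2)-2}$. Writing the exponent furnished by Theorem~\ref{1} as $\frac13 g(m)+3$, where
\[
g(m)=(n-m)\big((n+2m-2)(n-m-1)+3(m-1)\big),
\]
it suffices to prove $\frac13 g(m)+3\leq \frac13 n(n-1)(n-2)-2$ for every admissible pair with $m\geq 3$. In this way the whole lemma is reduced to an elementary inequality in the two integer parameters $n$ and $m$.

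First I would expand $g(m)$. A direct computation gives
\[
g(m)=n^3-3n^2-n-3m^2n+6mn+2m^3-3m^2+m,
\]
and since $n(n-1)(n-2)=n^3-3n^2+2n$, the desired inequality $g(m)+9\leq n(n-1)(n-2)-6$ simplifies, after cancelling the common $n^3-3n^2$ terms and collecting the coefficient of $n$ into $-3(m-1)^2$, to
\[
2m^3-3m^2+m+15\leq 3n(m-1)^2.
\]
This is the inequality I would then establish for all admissible $(n,m)$ with $m\geq 3$.

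The remaining ingredient is structural. Since $G$ is non-abelian it is non-cyclic, so $G/\Phi(G)$ has order at least $p^2$; because $G'\leq \Phi(G)$ in a $p$-group this forces $|G/G'|\geq p^2$, that is $n-m\geq 2$. As the right-hand side $3n(m-1)^2$ is increasing in $n$ (note $(m-1)^2>0$ for $m\geq 3$), it is enough to verify the inequality at the extremal value $n=m+2$, where it reads
\[
2m^3-3m^2+m+15\leq 3(m+2)(m-1)^2=3m^3-9m+6,
\]
equivalently $m^3+3m^2-10m-9\geq 0$. Since this cubic is positive at $m=3$ and has positive derivative there (indeed for all $m\geq 3$, as its second derivative is positive), it stays positive for every $m\geq 3$, which completes the argument.

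The main obstacle is not conceptual but the careful bookkeeping in the expansion of $g(m)$ and the verification that $n=m+2$ is genuinely the worst case; the only truly group-theoretic input is the elementary bound $n-m\geq 2$ valid for any non-abelian $p$-group, everything else reducing to the polynomial estimate above.
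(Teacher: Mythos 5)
Your proof is correct and takes essentially the same route as the paper: the paper's own proof is the one-line remark that the lemma follows from Theorem~\ref{1} together with the fact that $n\geq 5$, and your argument simply supplies the bookkeeping this leaves implicit, reducing the claim to $2m^3-3m^2+m+15\leq 3n(m-1)^2$ and verifying it at the extremal case $n=m+2$ (your observation that $n-m\geq 2$, since $G/G'$ cannot be cyclic for non-abelian $G$, is exactly the source of the paper's ``$n$ is at least $5$''). Your expansion of the exponent and the cubic estimate $m^3+3m^2-10m-9\geq 0$ for $m\geq 3$ both check out.
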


\begin{proof}
Just use Theorem \ref{1} and the fact that $n$ is at least 5.
\end{proof}

The following lemma has a completely similar proof to that of Lemma \ref{m>=3}.

\begin{lem}\label{m=2}
Let $G$ be a non abelian $p$-group of order $p^n$ with $|G'|=p^2$. Then $|\mathcal{M}^{(2)}(G)|\leq
p^{\f13n(n-1)(n-2)+1}$
\end{lem}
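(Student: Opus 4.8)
The plan is to substitute $m=2$ directly into Theorem \ref{1} and compare the resulting bound with the target exponent, exactly as in the proof of Lemma \ref{m>=3}. Since $|G'|=p^2$, the hypothesis of Theorem \ref{1} applies with $m=2$, giving
\[
|\mathcal{M}^{(2)}(G)|\leq p^{\f13(n-2)\big((n+2)(n-3)+3\big)+3}.
\]
First I would simplify the exponent: expanding $(n+2)(n-3)+3=n^2-n-3$ and then $(n-2)(n^2-n-3)=n^3-3n^2-n+6$, so the exponent becomes $\f13(n^3-3n^2-n+15)$.

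Next I would write the target exponent in the same form, namely $\f13n(n-1)(n-2)+1=\f13(n^3-3n^2+2n+3)$, and subtract. The difference between the target and the bound supplied by Theorem \ref{1} equals $\f13\big((n^3-3n^2+2n+3)-(n^3-3n^2-n+15)\big)=n-4$. Hence the bound from Theorem \ref{1} is at most $p^{\f13n(n-1)(n-2)+1}$ precisely when $n\geq 4$, which reduces the whole lemma to establishing this lower bound on $n$.

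Finally I would justify $n\geq 4$ from the structure of small $p$-groups: every group of order at most $p^2$ is abelian, and every non abelian group of order $p^3$ has derived subgroup of order $p$. Consequently a non abelian $p$-group with $|G'|=p^2$ cannot have order smaller than $p^4$, so $n\geq 4$ and the claimed inequality follows.

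The step requiring the most care is the polynomial bookkeeping in the exponent; there is no genuine obstacle beyond the risk of an arithmetic slip, since the structural input $n\geq 4$ is immediate from the classification of $p$-groups of order at most $p^3$. This mirrors the proof of Lemma \ref{m>=3}, where the analogous role was played by the bound $n\geq 5$.
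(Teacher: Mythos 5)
Your proof is correct and takes essentially the same route as the paper, whose proof of this lemma is the one-line remark that it is ``completely similar'' to Lemma~\ref{m>=3}, i.e.\ substitute $m=2$ into Theorem~\ref{1} and use a lower bound on $n$. Your exponent bookkeeping (difference $n-4$) and the justification that $n\geq 4$ — since non abelian groups of order $p^3$ have derived subgroup of order $p$ — correctly fill in exactly the arithmetic the paper leaves implicit.
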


The following theorem gives an upper bound for the 2-nilpotent multiplier of $G$.

\begin{thm}\label{our}
Let $G$ be a $p$-group and $B$ be a cyclic central subgroup of $G$. Then

\[|\mathcal{M}^{(2)}(G)|\leq |\mathcal{M}^{(2)}\big(G/B\big)||B\otimes G/B\otimes G/G'|.\]
\end{thm}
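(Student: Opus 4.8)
The plan is to prove the inequality
\[|\mathcal{M}^{(2)}(G)|\leq |\mathcal{M}^{(2)}(G/B)|\,|B\otimes G/B\otimes G/G'|\]
by exploiting the exact sequence recorded in Lemma \ref{p1}(i)(b), since $B$ is central and in particular contained in $Z_2(G)$. Setting $A=G/B$, that sequence reads
\[(B\wedge G)\wedge G \longrightarrow \mathcal{M}^{(2)}(G)\longrightarrow\mathcal{M}^{(2)}(A)\longrightarrow B\cap\gamma_3(G)\longrightarrow 1.\]
From the exactness at $\mathcal{M}^{(2)}(G)$, the kernel of the map $\mathcal{M}^{(2)}(G)\to\mathcal{M}^{(2)}(A)$ is the image of $(B\wedge G)\wedge G$, so
\[|\mathcal{M}^{(2)}(G)|\;=\;|\ker|\cdot|\mathrm{im}(\mathcal{M}^{(2)}(G)\to\mathcal{M}^{(2)}(A))|\;\leq\;|(B\wedge G)\wedge G|\cdot|\mathcal{M}^{(2)}(A)|.\]
Thus it suffices to bound $|(B\wedge G)\wedge G|$ from above by $|B\otimes G/B\otimes G/G'|$, which is the combinatorial heart of the argument.

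For that bound I would first analyze $B\wedge G$. Since $B$ is central in $G$, commutators into $B$ are trivial, and the nonabelian exterior product $B\wedge G$ collapses to a tensor-type object; concretely one expects a surjection (or isomorphism) onto $B\otimes G^{ab}=B\otimes G/G'$, because the exterior product of a central subgroup with $G$ factors through the abelianization in the second variable. I would then iterate: applying $(-)\wedge G$ again to the abelian group $B\wedge G$ (now a central, in fact trivial-action, module) yields a further surjection onto a tensor product $(B\wedge G)\otimes G^{ab}$. Combining the two surjections produces a surjection from $(B\wedge G)\wedge G$ onto $(B\otimes G/G')\otimes G/G'$, and a careful bookkeeping of which quotient of $G$ survives in each tensor slot gives the stated target $B\otimes G/B\otimes G/G'$.

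The main obstacle, and the step deserving the most care, is pinning down exactly which quotient of $G$ appears in the \emph{first} exterior factor after the collapse: one must distinguish between $G/G'$ and $G/B$ in the two tensor positions, since the claimed bound uses $G/B$ in the middle slot and $G/G'$ in the last. The reason the middle factor becomes $G/B$ rather than $G/G'$ is that $B$ lies in the first slot $B\wedge G$ and the exterior-product relations identify elements of $B$ with their images, effectively quotienting the second variable by the image of $B$; tracking this identification through the defining relations of $\wedge$ is where the argument is delicate. Once the surjection
\[(B\wedge G)\wedge G\twoheadrightarrow B\otimes G/B\otimes G/G'\]
is established (so that the order of the source is at most that of the target), the desired inequality follows immediately by substituting into the displayed product bound above.
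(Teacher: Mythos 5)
Your high-level route is genuinely different from the paper's: the paper never invokes Lue's exact sequence, but instead takes a free presentation $G=F/R$, $B=S/R$, identifies $\mathcal{M}^{(2)}(G)\cong (R\cap\gamma_3(F))/[R,F,F]$ and $\mathcal{M}^{(2)}(G/B)\cong (S\cap\gamma_3(F))/[S,F,F]$, and then constructs by hand, through a long commutator calculation, a well-defined multilinear epimorphism $S/R\otimes F/S\otimes F/RF'\twoheadrightarrow [S,F,F]/[R,F,F]$. Your first step is fine and is in fact cleaner: exactness of $(B\wedge G)\wedge G\to\mathcal{M}^{(2)}(G)\to\mathcal{M}^{(2)}(G/B)$ at the middle term does give $|\mathcal{M}^{(2)}(G)|\leq |(B\wedge G)\wedge G|\,|\mathcal{M}^{(2)}(G/B)|$, correctly reducing everything to the bound $|(B\wedge G)\wedge G|\leq |B\otimes G/B\otimes G/G'|$. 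But from there the write-up has the surjections pointing the wrong way: you assert maps \emph{from} the exterior products \emph{onto} tensor products and conclude that ``the order of the source is at most that of the target,'' whereas a surjection forces the source to be at least as large as the target. What is true, and what you need, is the reverse: since $B$ is central both actions are trivial, $B\otimes G\cong B\otimes G^{ab}$, and the exterior product is a \emph{quotient} of the tensor product, so the required epimorphisms are $B\otimes G/G'\twoheadrightarrow B\wedge G$ and then $(B\wedge G)\otimes G/G'\twoheadrightarrow (B\wedge G)\wedge G$. This is likely a fixable slip of direction, but as written the logic is inverted.

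The genuine gap is the step you yourself flag as delicate and then do not carry out: getting $G/B$ rather than $G/G'$ in the middle slot. Nothing in your argument uses that $B$ is cyclic, and without that hypothesis the claimed collapse --- and indeed the theorem itself --- is false: take $G=B=\mathbb{Z}_p\oplus\mathbb{Z}_p$. Then the right-hand side is $|\mathcal{M}^{(2)}(1)|\cdot|B\otimes 1\otimes G/G'|=1$, while $|\mathcal{M}^{(2)}(G)|=p^2$ by Lemma \ref{p4}(i) (or Lemma \ref{p2}). The reason is that the exterior relations in $B\wedge G$ kill only the subgroup generated by the diagonal elements $b\wedge b$, $b\in B$, not the full image of $B\otimes B$. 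When $B=\langle x\rangle$ is cyclic and central, bilinearity (all actions being trivial) gives $x^i\wedge x^j=(x\wedge x)^{ij}=1$, so the entire image of $B$ in the second variable dies and the middle factor collapses onto a quotient of $G/B$ (in fact $G/BG'$); when $B$ is not cyclic, $B\wedge B\neq 1$ and no such collapse occurs, which is exactly what the counterexample detects. So your sketch is missing precisely the lemma that makes the bound true, and the hypothesis it must consume; note the paper's proof spends cyclicity at the analogous point, deducing $S'=[R,S]$ in the free presentation before its commutator computations. With the direction of the surjections corrected and the cyclic-collapse argument supplied, your approach does yield the theorem and is arguably tidier than the paper's explicit calculus.
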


\begin{proof}
Let $G=F/R$ and $B=S/R$ be free presentations for $G$ and $B$, respectively. Since $B$ ic central, we have $[S,F]\subseteq R$, and also $R\cap S'=[R,S]$ because $B$  is cyclic. Now $S'\subseteq R$, and so $S'=[R,S]$.

 From the definition  \[\mathcal{M}^{(2)}(G)\cong \frac{R\cap \gamma_3(F)}{[R,F,F]}~\text{and}~\mathcal{M}^{(2)}\big(G/B\big)\cong \frac{S\cap \gamma_3(F)}{[S,F,F]},\] and so \[|\mathcal{M}^{(2)}(G)|=|\mathcal{M}^{(2)}\big(G/B\big)||\frac{[S,F,F]}{[R,F,F]}|.\]
The proof is completed if  there exists a well-defined epimorphism \[\psi:S/R\otimes F/S \otimes F/RF'\longrightarrow \frac{[S,F,F]}{[R,F,F]}.\]
Define the map $\psi$ by the rule $\psi(sR,f_1S,f_2RF')=[s,f_1,f_2][R,F,F]$.  It is enough to show that  $\psi$ is a well-defined multi linear mapping. For this, first we show that \[[sr,f_1s',f_2r'\gamma]\equiv [s,f_1,f_2] \pmod{[R,F,F]}\] where $r,r'\in R$, $s,s'\in S$ and $\gamma\in F'$.

Expanding the commutator on the left hand side we have $[sr,f_1s',f_2r'\gamma]=[sr,f_1s',r'\gamma][sr,f_1s',f_2][sr,f_1s',f_2,r'\gamma]$. Trivially $[sr,f_1s',f_2,r'\gamma]\in [S,F,F,F]$, but $[S,F]\subseteq R$ and so $[S,F,F,F]\subseteq [R,F,F]$. On the other hand $[sr,f_1s',r'\gamma]=[sr,f_1s',\gamma][sr,f_1s',r'][sr,f_1s',r',\gamma]$ which is contained in $[S,F,F'][S,F,R]$. A simple use of Three Subgroup Lemma shows the latter is contained in $[R,F,F]$. We proved that $[sr,f_1s',f_2r'\gamma]\equiv[sr,f_1s',f_2] \pmod{[R,F,F]}$. Again using commutator calculus, we get \[[sr,f_1s',f_2]=[sr,s',f_2][sr,s',f_2,[sr,f_1]^{s'}][[sr,f_1]^{s'},f_2].\] It is easy to see that \newline \[[sr,s',f_2][sr,s',f_2,[sr,f_1]^{s'}]\in [S,S,F]=[S',F]=[R,S,F]\subseteq[R,F,F].\] Finally $[[sr,f_1]^{s'},f_2]=[sr,f_1,f_2][sr,f_1,f_2,[sr,f_1,s']][sr,f_1,s',f_2]$, for the last two ones we have $[sr,f_1,f_2,[sr,f_1,s']][sr,f_1,s',f_2]\in [S,F,F,F]\subseteq [R,F,F]$. The first one can be decomposed as \[[sr,f_1,f_2]=[s,f_1,f_2][s,f_1,f_2,[s,f_1,r]][s,f_1,r,f_2][[s,f_1]^r,f_2,[r,f1]][r,f_1,f_2],\] and we have \[\begin{array}{lcl}[s,f_1,f_2,[s,f_1,r]][s,f_1,r,f_2][[s,f_1]^r,f_2,[r,f1]][r,f_1,f_2]
&\in& [S,F,F,F][R,F,F]\vspace{.3cm}\\&\subseteq& [R,F,F].\end{array}\]  The multi linearity of this mapping is an straightforward commutator calculus.
\end{proof}

Considering Lemmas \ref{m>=3} and \ref{m=2} for characterizing all $p$-groups wit $s_2(G)\in \{1,2,3\}$,  it is enough to work with $p$-groups with $|G'|\leq p^2$. First we deal with those groups having commutator subgroup of order $p$. If $G/G'$ is not elementary abelian we have

\begin{lem}\label{nl}
Let $G$ be a $p$-group of order $p^n$ with $G'$ of order $p$. If $G/G'$ is not elementary abelian then
\[|\mathcal{M}^{(2)}(G)|\leq p^{\frac13n(n-1)(n-2)-2}.\]
\end{lem}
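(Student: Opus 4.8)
The plan is to apply the bound of Theorem \ref{our} with $B=G'$, thereby reducing the whole estimate to the abelian quotient $G/G'$, where Lemma \ref{p4} is available. First I would note that since $|G'|=p$, the subgroup $G'$ is a nontrivial normal subgroup of the $p$-group $G$; as every nontrivial normal subgroup of a $p$-group meets the centre nontrivially and $|G'|=p$, this forces $G'\leq Z(G)$. Hence $G'$ is a cyclic central subgroup and Theorem \ref{our} applies with $B=G'$, giving
\[|\mathcal{M}^{(2)}(G)|\leq |\mathcal{M}^{(2)}(G/G')|\,|G'\otimes G/G'\otimes G/G'|.\]

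Next I would analyse the two factors separately. Write $G/G'\cong \mathbb{Z}_{p^{m_1}}\oplus\cdots\oplus \mathbb{Z}_{p^{m_k}}$ with $m_1\geq\cdots\geq m_k\geq 1$ and $\sum_{i=1}^k m_i=n-1$; the hypothesis that $G/G'$ is not elementary abelian is precisely $m_1\geq 2$. Since $G'\cong \mathbb{Z}_p$, one has $G'\otimes G/G'\cong (G/G')/p(G/G')\cong \mathbb{Z}_p^{(k)}$, and tensoring once more yields $G'\otimes G/G'\otimes G/G'\cong \mathbb{Z}_p^{(k^2)}$, so the tensor factor contributes exactly $p^{k^2}$. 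For the quotient, $G/G'$ is abelian of order $p^{n-1}$ with $m_1\geq 2$, so Lemma \ref{p4}(ii) gives $|\mathcal{M}^{(2)}(G/G')|\leq p^{\frac13(n-1)(n-2)(n-3)}$. Combining,
\[|\mathcal{M}^{(2)}(G)|\leq p^{\frac13(n-1)(n-2)(n-3)+k^2}.\]

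Finally I would carry out the arithmetic, using the identity $\frac13(n-1)(n-2)(n-3)=\frac13 n(n-1)(n-2)-(n-1)(n-2)$, which reduces the claimed bound to the inequality $k^2\leq (n-1)(n-2)-2$. Here the decisive sharpening is that $m_1\geq 2$ together with $\sum m_i=n-1$ forces $k\leq n-2$ (one cyclic factor already absorbs two, and each of the remaining factors absorbs at least one). I would also observe that these hypotheses force $n\geq 4$: indeed $n=3$ would make $G/G'$ cyclic of order $p^2$ and hence $G$ abelian, contradicting $|G'|=p$. Then $k^2\leq (n-2)^2=n^2-4n+4\leq n^2-3n=(n-1)(n-2)-2$ holds precisely for $n\geq 4$, completing the proof.

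The main point to watch is the sharpness of the counting step: the weaker estimate $k\leq n-1$, which follows merely from $|G/G'|=p^{n-1}$, is \emph{not} enough, since $(n-1)^2>(n-1)(n-2)-2$ for all $n$. One genuinely needs the bound $k\leq n-2$ coming from $m_1\geq 2$, and the final inequality is tight at $n=4$ (where $k=2$ and both sides equal $4$); so the argument leaves no slack, and the ``not elementary abelian'' hypothesis is used twice in an essential way, both to invoke Lemma \ref{p4}(ii) and to secure $k\leq n-2$.
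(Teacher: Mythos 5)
Your proof is correct and follows essentially the same route as the paper: apply Theorem \ref{our} with $B=G'$, bound $|\mathcal{M}^{(2)}(G/G')|$ via Lemma \ref{p4}(ii), bound the tensor factor by $p^{(n-2)^2}$, and check the arithmetic for $n\geq 4$. Your write-up merely makes explicit what the paper leaves implicit --- that $G'$ is central so Theorem \ref{our} applies, that the tensor factor is exactly $p^{k^2}$ with the rank bound $k\leq n-2$ forced by $m_1\geq 2$ (which is indeed needed, since $k\leq n-1$ would not suffice), and that $n\geq 4$ holds automatically.
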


\begin{proof}
We use Theorem \ref{our} with $B=G'$ to have \[|\mathcal{M}^{(2)}(G)|\leq |\mathcal{M}^{(2)}\big(G/G'\big)||G'\otimes G/G'\otimes G/G'|.\] Since $G/G'$ is not elementary abelian, by using Lemma \ref{p4} \[|\mathcal{M}^{(2)}\big(G/G'\big)|\leq p^{\frac13(n-1)(n-2)(n-3)}\] Since $|G'\otimes G/G'\otimes G/G'|\leq p^{(n-2)^2}$, an straightforward computation shows the result provided that for $n\geq 4$.
\end{proof}

Now we may assume that $G/G'$ is elementary abelian. In \cite[Lemma 2.1]{ni} $p$-groups with $G'=\phi(G)$ (the Frattini subgroup) of order $p$ are classified as the central product of an extra special $p$-group $H$ with the center of $G$, $Z(G)$. That is $G=H\cdot Z(G)$. Now depending on how $G'$ embeds into $Z(G)$, we have the following lemma which has an straightforward proof.

\begin{lem}
Let $G$ be a $p$-group with $G'=\phi(G)$ of order $p$. Then
\begin{itemize}
\item[(i)] If $G'$ is a direct summand of $Z(G)$ then $G=H\times K$ for some finite abelian group $K$.
\item[(ii)] If $G'$ is not a direct summand of $Z(G)$ then $G=\big(H\cdot \mathbb{Z}_{p^t}\big) \times K$ where $t\geq 2$ and $K$ is a finite abelian $p$-group.
\end{itemize}

\end{lem}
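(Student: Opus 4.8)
The plan is to exploit the central-product decomposition $G=H\cdot Z(G)$ supplied by \cite[Lemma 2.1]{ni}, in which $H$ is extra-special with $Z(H)=H'=G'=\langle z\rangle$ of order $p$ and $H\cap Z(G)=\langle z\rangle$. Since $Z(G)$ is a finite abelian $p$-group and $z$ is a fixed element of order $p$, the whole argument reduces to choosing a cyclic decomposition of $Z(G)$ adapted to $z$ and then transporting it through the central product. Note also that $\langle z\rangle=G'=H'\subseteq H$, a fact I will use repeatedly.

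First I would establish the structural fact that there is a direct decomposition $Z(G)=\langle w\rangle\oplus K$ into a cyclic summand $\langle w\rangle$ of order $p^{t}$ and a complement $K$, arranged so that $z=p^{t-1}w$ generates the unique order-$p$ subgroup of $\langle w\rangle$. This is the standard statement that any element of order $p$ in a finite abelian $p$-group lies in some cyclic direct summand. The dichotomy of the lemma corresponds exactly to the value of $t$: the subgroup $\langle z\rangle=G'$ is a direct summand of $Z(G)$ precisely when one may take $t=1$ (so $w=z$), and it fails to be a direct summand precisely when the adapted summand has order $p^{t}$ with $t\ge 2$.

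Next I would set $L=H\cdot\langle w\rangle$, the central product of $H$ with $\langle w\rangle\cong\mathbb{Z}_{p^{t}}$ amalgamated along $\langle z\rangle$; here $H\cap\langle w\rangle\subseteq H\cap Z(G)=\langle z\rangle$ together with $\langle z\rangle\subseteq H$ and $\langle z\rangle\subseteq\langle w\rangle$ forces $H\cap\langle w\rangle=\langle z\rangle$, so $L$ really is the central product $H\cdot\mathbb{Z}_{p^{t}}$. I then claim $G=L\times K$. Centrality is immediate, since $K\subseteq Z(G)$ gives $[L,K]=1$, and $G=H\,Z(G)=H\langle w\rangle K=LK$. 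For the trivial intersection, take $x\in L\cap K$ and write $x=hc$ with $h\in H$ and $c\in\langle w\rangle$; because $x$ and $c$ are both central, so is $h$, whence $h\in H\cap Z(G)=\langle z\rangle\subseteq\langle w\rangle$ and therefore $x=hc\in\langle w\rangle\cap K=1$. This yields $G=(H\cdot\mathbb{Z}_{p^{t}})\times K$ with $K$ abelian, which is exactly (ii) when $t\ge 2$; in the remaining case $t=1$ we have $\langle w\rangle=\langle z\rangle\subseteq H$, so $L=H$ and $G=H\times K$, giving (i).

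The only genuinely non-routine point is the adapted decomposition of $Z(G)$ in the first step; everything after it is centrality bookkeeping. I expect the main obstacle to be verifying that a non-summand subgroup of order $p$ really does sit as the socle of a cyclic summand of order at least $p^{2}$. This can be handled by choosing $w$ with $z\in\langle w\rangle$ of maximal possible order and checking purity, or simply by invoking the classification of finite abelian $p$-groups, after which the statement that $\langle z\rangle$ is a direct summand iff $z\notin p\,Z(G)$ separates the two cases cleanly.
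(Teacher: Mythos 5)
Your proof is correct: the paper offers no written argument here (it declares the lemma ``straightforward'' immediately after quoting the decomposition $G=H\cdot Z(G)$ from \cite[Lemma 2.1]{ni}), and your proposal supplies exactly the intended details along that same route. The adapted cyclic summand $\langle w\rangle$ with $z=w^{p^{t-1}}$ of maximal height exists by the standard purity argument, the dichotomy $t=1$ versus $t\geq 2$ matches $\langle z\rangle$ being a direct summand (equivalently $z\notin Z(G)^p$), and your verification that $G=\bigl(H\cdot\langle w\rangle\bigr)\times K$ (centrality, generation, trivial intersection via $h=xc^{-1}\in H\cap Z(G)=\langle z\rangle$) is complete and sound.
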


As we consider the groups for which $G/G'$ is elementary abelian, we have only the following two cases. $T$ is an elementary abelian $p$-group.

\begin{itemize}
\item[(i)] $G=H\times T$.
\item[(ii)] $G=H\cdot \mathbb{Z}_{p^2}\times T$.
\end{itemize}

For the groups of type $(i)$ we have.

\begin{thm}\label{2.8}
Let $G=H\times T$ where $H$ is an extra special $p$-group and $T$ is an elementary abelian $p$-group. Then
\begin{itemize}
\item[(i)] if $H=E_1$ then $|\mathcal{M}^{(2)}(G)|=p^{\frac13n(n-1)(n-2)+3}$
\item[(ii)] if $H=D_8$ then $|\mathcal{M}^{(2)}(G)|=2^{\frac13n(n-1)(n-2)+1}$
\item[(iii)] in other cases $|\mathcal{M}^{(2)}(G)|=p^{\frac13n(n-1)(n-2)}$
\end{itemize}
\end{thm}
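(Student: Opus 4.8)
The plan is to feed the decomposition $G=H\times T$ into the direct-product formula of Lemma \ref{p2} and then collapse everything onto a single polynomial identity. Writing $|H|=p^{2m+1}$ and $|T|=p^{k}$, so that $n=2m+1+k$, Lemma \ref{p2} gives
\[|\mathcal{M}^{(2)}(G)| = |\mathcal{M}^{(2)}(H)|\,|\mathcal{M}^{(2)}(T)|\,\bigl|(H^{ab}\otimes H^{ab})\otimes T^{ab}\bigr|\,\bigl|(T^{ab}\otimes T^{ab})\otimes H^{ab}\bigr|.\]
The whole computation then reduces to evaluating these four factors and adding their exponents.

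I would first record the two structural facts that make the cross terms immediate. Since $H$ is extra special, $H^{ab}=H/H'=H/Z(H)\cong\mathbb{Z}_p^{(2m)}$ is elementary abelian of rank $2m$, and $T^{ab}=T\cong\mathbb{Z}_p^{(k)}$ is elementary abelian of rank $k$. Over $\mathbb{Z}$ a tensor product of elementary abelian $p$-groups is again elementary abelian, of rank the product of the ranks, so $(H^{ab}\otimes H^{ab})\otimes T^{ab}\cong\mathbb{Z}_p^{(4m^2k)}$ and $(T^{ab}\otimes T^{ab})\otimes H^{ab}\cong\mathbb{Z}_p^{(2mk^2)}$. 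Moreover $T$ is elementary abelian, so Lemma \ref{p4}(i) yields $|\mathcal{M}^{(2)}(T)|=p^{\frac13 k(k-1)(k+1)}$. Thus three of the four factors together contribute the exponent $\frac13 k(k-1)(k+1)+4m^2k+2mk^2$, uniformly in all three cases.

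For the last factor I would invoke Lemma \ref{p3} and write $|\mathcal{M}^{(2)}(H)|=p^{\frac13(8m^3-2m)+\delta}$, where $\delta$ measures the deviation from the value $\frac13(8m^3-2m)$ predicted by Lemma \ref{p3}(i). For $m>1$ that part applies directly and $\delta=0$. For the order-$p^3$ groups, where the generic value at $m=1$ is $2$, parts (ii) and (iii) give: $E_1$ has $|\mathcal{M}^{(2)}|=p^5$, so $\delta=3$; $D_8$ has $|\mathcal{M}^{(2)}|=2^3$, so $\delta=1$; while $Q_8$ and the exponent-$p^2$ group of order $p^3$ both give $p^2$, so $\delta=0$, which is exactly why they land in case (iii).

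Finally, summing all four exponents leaves $\frac13(8m^3-2m)+4m^2k+2mk^2+\frac13 k(k-1)(k+1)+\delta$, so the theorem comes down to the identity
\[\tfrac13(8m^3-2m)+4m^2k+2mk^2+\tfrac13 k(k-1)(k+1)=\tfrac13 n(n-1)(n-2),\]
which I would verify by setting $u=2m+k=n-1$ and expanding $\tfrac13(u^3-u)=\tfrac13(u-1)u(u+1)=\tfrac13 n(n-1)(n-2)$ through the binomial expansion of $(2m+k)^3$. Reading off $\delta$ then produces $\epsilon=3,1,0$ in cases (i), (ii), (iii) respectively. There is no genuine obstacle beyond the bookkeeping; the only point needing care is the separate treatment of the three order-$p^3$ groups, since Lemma \ref{p3}(i) is stated only for $m>1$ and one must check by hand that precisely $E_1$ and $D_8$ depart from the generic exponent.
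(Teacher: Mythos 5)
Your proposal is correct and follows essentially the same route as the paper, whose proof simply declares the result ``a straightforward computation using Lemmas \ref{p2} and \ref{p3}''; you have carried out exactly that computation, correctly supplementing it with Lemma \ref{p4}(i) for $|\mathcal{M}^{(2)}(T)|$ and handling the three order-$p^3$ cases via Lemma \ref{p3}(ii)--(iii). The only blemish is a harmless notational slip at the end ($\epsilon$ where you mean $\delta$); the polynomial identity via $u=2m+k=n-1$ checks out.
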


\begin{proof}
Its just an straightforward computations using Lemmas \ref{p2} and \ref{p3}.
\end{proof}

For the second type,  first we compute the order of the $2$-nilpotent multiplier of $H\cdot \mathbb{Z}_{p^2}$.

\begin{thm}\label{2.9}
Let $G=H\cdot\mathbb{Z}_{p^2}$ be of order $p^n$. Then $|\mathcal{M}^{(2)}(G)|=p^{\frac13n(n-1)(n-2)}$.
\end{thm}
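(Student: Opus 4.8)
The plan is to sandwich $|\mathcal{M}^{(2)}(G)|$ between matching upper and lower bounds, using \emph{two different} central subgroups of $G$ for the two directions. First I would record the structural facts I intend to lean on. Writing $H$ for the extra-special factor of order $p^{2m+1}$, the central product $G=H\cdot\mathbb{Z}_{p^2}$ (which identifies $Z(H)=G'$ with the subgroup of order $p$ in $\mathbb{Z}_{p^2}=\langle a\rangle$) has order $p^{2m+2}$, so $n=2m+2$. Since $H$ has class $2$ and $\mathbb{Z}_{p^2}$ is abelian, $G$ has class $2$; in particular $G'=Z(H)$ is central of order $p$ and $\gamma_3(G)=[G',G]=1$. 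Moreover $Z(G)=\langle a\rangle\cong\mathbb{Z}_{p^2}$ is cyclic, while $G/\langle a\rangle\cong H/Z(H)\cong\mathbb{Z}_p^{(n-2)}$ and $G/G'\cong\mathbb{Z}_p^{(n-1)}$ are both elementary abelian.

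For the upper bound I would apply Theorem \ref{our} with the cyclic central subgroup $B=Z(G)\cong\mathbb{Z}_{p^2}$, giving
\[|\mathcal{M}^{(2)}(G)|\leq|\mathcal{M}^{(2)}(G/B)|\,|B\otimes G/B\otimes G/G'|.\]
By Lemma \ref{p4}(i), $|\mathcal{M}^{(2)}(\mathbb{Z}_p^{(n-2)})|=p^{\frac13(n-2)(n-3)(n-1)}$. For the tensor factor, $B\otimes G/B=\mathbb{Z}_{p^2}\otimes\mathbb{Z}_p^{(n-2)}\cong\mathbb{Z}_p^{(n-2)}$, and then $(B\otimes G/B)\otimes G/G'\cong\mathbb{Z}_p^{(n-2)(n-1)}$, so this factor has order $p^{(n-2)(n-1)}$. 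Adding the exponents gives $\frac13(n-2)(n-1)(n-3)+(n-2)(n-1)=\frac13 n(n-1)(n-2)$, i.e. $|\mathcal{M}^{(2)}(G)|\leq p^{\frac13 n(n-1)(n-2)}$.

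For the lower bound I would instead take $B=G'$ in Lemma \ref{p1}(ii). Because $\gamma_3(G)=1$ we have $G'\cap\gamma_3(G)=1$ and $[[G',G],G]=1$, so both correction terms are trivial and the lemma reduces to the assertion that $|\mathcal{M}^{(2)}(G/G')|$ divides $|\mathcal{M}^{(2)}(G)|$. Since $G/G'\cong\mathbb{Z}_p^{(n-1)}$, Lemma \ref{p4}(i) gives $|\mathcal{M}^{(2)}(G/G')|=p^{\frac13 n(n-1)(n-2)}$, whence $|\mathcal{M}^{(2)}(G)|\geq p^{\frac13 n(n-1)(n-2)}$. Combining the two bounds yields the asserted equality.

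The computation is essentially forced once the right subgroups are chosen, so I do not expect a deep obstacle; the points that need genuine care are (a) verifying $\gamma_3(G)=1$, which is precisely what makes the correction terms in Lemma \ref{p1}(ii) vanish and lets the lower bound land exactly on the target, and (b) the arithmetic showing that both the elementary abelian quotient $G/G'$ of rank $n-1$ and the combination of $\mathcal{M}^{(2)}(G/Z(G))$ with the triple tensor product evaluate to $p^{\frac13 n(n-1)(n-2)}$. I note that this route avoids any case analysis on the isomorphism type of $H$ and never invokes the explicit order of $\mathcal{M}^{(2)}(H)$ from Lemma \ref{p3}.
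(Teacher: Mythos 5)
Your proof is correct and takes essentially the same route as the paper's: the upper bound via Theorem \ref{our} applied to the cyclic central subgroup $B=Z(G)\cong\mathbb{Z}_{p^2}$, and the lower bound by taking $B=G'$ in Lemma \ref{p1}, where $\gamma_3(G)=1$ kills both correction terms so that $|\mathcal{M}^{(2)}(G/G')|$ divides $|\mathcal{M}^{(2)}(G)|$, combined with Lemma \ref{p4}(i) for $G/G'\cong\mathbb{Z}_p^{(n-1)}$. If anything, your write-up is tidier than the original: you cite the correct part (ii) of Lemma \ref{p1} where the paper loosely says ``(a)'', and your tensor-factor count $p^{(n-2)(n-1)}$ repairs the paper's misprinted $p^{(2m+1)^2}$ (the correct exponent is $2m(2m+1)$), which is exactly what makes the exponents sum to $\frac13 n(n-1)(n-2)$.
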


\begin{proof}
Using Theorem \ref{our} with $B=\mathbb{Z}_{p^2}$ we get \[|\mathcal{M}^{(2)}(G)|\leq |\mathcal{M}^{(2)}\big(G/\mathbb{Z}_{p^2}\big)||\mathbb{Z}_{p^2}\otimes G/\mathbb{Z}_{p^2}\otimes G/G'|.\] By assumption $|H|=p^{2m+1}$, we have \[|\mathcal{M}^{(2)}\big(G/\mathbb{Z}_{p^2}\big)|=p^{\frac132m(2m+1)(2m-1)}~ \text{and} ~|\mathbb{Z}_{p^2}\otimes G/\mathbb{Z}_{p^2}\otimes G/G'|=p^{(2m+1)^2}.\] After some computations $|\mathcal{M}^{(2)}(G)|\leq p^{\frac13n(n-1)(n-2)}$. Now Lemma \ref{p1} (a) with $B=G'$ shows $ |\mathcal{M}^{(2)}\big(G/G'\big)|\leq|\mathcal{M}^{(2)}(G)|$. The result now follows by using Lemma \ref{p4}.
\end{proof}

Now the following theorem which has a proof completely similar to the last two ones, completes the groups by type $(ii)$.

\begin{thm}\label{cs}
Let $G=H\cdot\mathbb{Z}_{p^2}\times T$ be of order $p^n$ in which $T$ is an elementary abelian $p$-group and $H$ is an extra-special $p$-groups. Then $|\mathcal{M}^{(2)}(G)|=p^{\frac13n(n-1)(n-2)}$.
\end{thm}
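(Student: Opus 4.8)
The plan is to peel off the elementary abelian direct factor $T$ by means of the decomposition in Lemma \ref{p2}, and then to feed in the order of the $2$-nilpotent multiplier of the central product $K:=H\cdot\mathbb{Z}_{p^2}$ that was already computed in Theorem \ref{2.9}. This is exactly the strategy of Theorem \ref{2.8}, with the extra-special factor $H$ there replaced by $K$ and with Theorem \ref{2.9} used in place of Lemma \ref{p3}.

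First I would fix the numerology. Writing $|H|=p^{2m+1}$ and $|T|=p^{t}$, the central product identifies $Z(H)=H'$ with the unique subgroup of order $p$ of $\mathbb{Z}_{p^2}$, so $|K|=p^{2m+2}$ and hence $n=2m+2+t$. I would then record the abelianizations. Since $K'=H'$ has order $p$ and we are in the case where $G/G'$ is elementary abelian, $K/K'$ is elementary abelian; counting orders gives $K^{ab}\cong\mathbb{Z}_p^{(2m+1)}$, while $T^{ab}=T\cong\mathbb{Z}_p^{(t)}$.

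Next I would apply Lemma \ref{p2} to the direct product $G=K\times T$, obtaining
\[\mathcal{M}^{(2)}(G)\cong \mathcal{M}^{(2)}(K)\times\mathcal{M}^{(2)}(T)\times (K^{ab}\otimes K^{ab})\otimes T^{ab}\times (T^{ab}\otimes T^{ab})\otimes K^{ab}.\]
Every factor on the right now has a known order: Theorem \ref{2.9} gives $|\mathcal{M}^{(2)}(K)|=p^{\frac13(2m+2)(2m+1)(2m)}$; Lemma \ref{p4}(i) gives $|\mathcal{M}^{(2)}(T)|=p^{\frac13 t(t-1)(t+1)}$; and because all the abelianizations are elementary abelian, the two tensor terms contribute $p^{(2m+1)^2 t}$ and $p^{t^2(2m+1)}$ respectively. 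Since all of these are genuine equalities (Lemma \ref{p2} is an isomorphism), the order of $\mathcal{M}^{(2)}(G)$ is exactly $p^E$ with $E$ the sum of the four exponents.

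Finally I would verify the polynomial identity $E=\frac13 n(n-1)(n-2)$ for $n=2m+2+t$. Setting $a=2m+1$, so that $2m+2=a+1$, $2m=a-1$ and $n=a+t+1$, one has $E=\frac13(a^3-a)+\frac13(t^3-t)+a^2t+at^2$, which equals $\frac13\big((a+t)^3-(a+t)\big)=\frac13 n(n-1)(n-2)$; the cross terms $a^2t$ and $at^2$ are precisely what interpolates between the $t=0$ value of Theorem \ref{2.9} and the target. There is no real obstacle here: the computation is mechanical once the decomposition is in place, and the only points demanding care are correctly identifying $K^{ab}$ as elementary abelian of rank $2m+1$ and tracking the ranks in the two tensor products.
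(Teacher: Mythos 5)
Your proposal is correct and coincides with the paper's intended argument: the paper gives no details, saying only that the proof is ``completely similar to the last two ones,'' and your argument is exactly that template --- apply Lemma \ref{p2} to $G=(H\cdot\mathbb{Z}_{p^2})\times T$ as in Theorem \ref{2.8}, with Theorem \ref{2.9} supplying $|\mathcal{M}^{(2)}(H\cdot\mathbb{Z}_{p^2})|$ in place of Lemma \ref{p3}. Your supporting details also check out: $K^{ab}\cong\mathbb{Z}_p^{(2m+1)}$ is indeed elementary abelian (automatically, since the generator $z$ of $\mathbb{Z}_{p^2}$ satisfies $z^p\in H'=K'$, so no appeal to the ambient case assumption is even needed), and the exponent identity $\frac13(a^3-a)+\frac13(t^3-t)+a^2t+at^2=\frac13\big((a+t)^3-(a+t)\big)$ with $a+t=n-1$ gives exactly $\frac13 n(n-1)(n-2)$.
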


In the rest we concentrate on the groups with the derived subgroup of order $p^2$.

\begin{lem}
Let $G$ be a $p$-group of order $p^n$ with $G'$ of order $p^2$. If $Z(G)$ is not elementary abelian then $|\mathcal{M}^{(2)}(G)|\leq p^{\frac13n(n-1)(n-2)-2}.$
\end{lem}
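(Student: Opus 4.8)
The plan is to exploit the same central-subgroup machinery used throughout this section, now arranging for the quotient to have a well-understood multiplier and for the relevant tensor factor to be small. Since $|G'|=p^2$ and $Z(G)$ is \emph{not} elementary abelian, there is an element of order at least $p^2$ in the center. First I would choose inside $Z(G)$ a cyclic central subgroup $B=\mathbb{Z}_{p^t}$ with $t\geq 2$, selected so that $B\cap G'$ is as small as possible; the non-elementary-abelian hypothesis is exactly what guarantees such a $B$ exists with $|B|\geq p^2$ while keeping control of its intersection with the derived subgroup. With this choice, Theorem \ref{our} gives
\[|\mathcal{M}^{(2)}(G)|\leq \bigl|\mathcal{M}^{(2)}(G/B)\bigr|\,\bigl|B\otimes G/B\otimes G/G'\bigr|.\]

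Next I would estimate each factor on the right. The tensor factor is elementary: writing $|G/B|=p^{n-t}$ and noting $G/G'$ is a quotient of $G/B$ of rank at most $n-t$, the order $|B\otimes G/B\otimes G/G'|$ is bounded by $p^{(n-t)^2}$ (the cyclic factor $B$ contributes only a single generator to each tensor slot). For the multiplier of the quotient $G/B$, which has order $p^{n-t}$, I would feed this into Theorem \ref{1}: the derived subgroup of $G/B$ has order dividing $p^2$, so applying the bound of Theorem \ref{1} with the appropriate value of $m\in\{1,2\}$ (or, where $G/B$ has become abelian, Lemma \ref{p4}) yields a bound of the shape $p^{\frac13(n-t)(\cdots)}$ that is strictly smaller than the generic cubic $\frac13(n-t)(n-t-1)(n-t-2)$ leading term by an amount that grows with $t$.

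The heart of the argument is then the arithmetic: I would combine the two bounds and show that the total exponent is at most $\frac13 n(n-1)(n-2)-2$. The key mechanism is that passing to a cyclic central quotient of order $p^t$ with $t\geq 2$ drops the cubic main term $\frac13 n(n-1)(n-2)$ by a cubic-in-$t$ deficit, while the tensor factor only adds back a quadratic $(n-t)^2$; for $t\geq 2$ and $n$ in the relevant range (at least $5$, as in Lemma \ref{m>=3}) the cubic loss dominates and leaves at least the required slack of $2$.

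The step I expect to be the main obstacle is the bookkeeping of $B\cap G'$ and the precise value of $m$ for the quotient. When $B$ meets $G'$ nontrivially, $(G/B)'$ can have order $p$ rather than $p^2$, which changes which case of Theorem \ref{1} applies and shifts the exponent; conversely, choosing $B$ disjoint from $G'$ may force $t$ larger, which helps the cubic deficit but must be tracked consistently. I would therefore split into the cases $B\cap G'=1$ and $B\cap G'\neq 1$, verify the exponent inequality separately in each, and confirm in every case that the slack of $2$ survives. The remaining verification is routine polynomial comparison and I would relegate it to a short computation, exactly as the nearby lemmas (Lemma \ref{m>=3}, Lemma \ref{m=2}, Lemma \ref{nl}) do.
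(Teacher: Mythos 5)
Your overall strategy is exactly the paper's: pick a cyclic central subgroup $B$ of order at least $p^2$ (which exists precisely because $Z(G)$ is not elementary abelian), apply Theorem \ref{our}, bound the two factors, and finish by arithmetic. The paper simply fixes $t=2$ and bounds $|\mathcal{M}^{(2)}(G/B)|$ by $p^{\frac13(n-1)(n-2)(n-3)}$, the maximum possible for \emph{any} group of order $p^{n-2}$ (attained by the elementary abelian group, Lemma \ref{p4}); since this universal bound already dominates every case, the obstacle you flag as the heart of the difficulty --- tracking $B\cap G'$ and which value of $m$ applies to $(G/B)'$ in Theorem \ref{1} --- is a non-issue and your case split is superfluous. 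With $t=2$ the arithmetic is $\frac13(n-1)(n-2)(n-3)+(n-2)^2\leq \frac13 n(n-1)(n-2)-2$, which holds exactly when $n\geq 4$, automatic here since $|G'|=p^2$ forces $|G/G'|\geq p^2$.

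One step in your generalized version is genuinely wrong as stated: you justify the tensor bound $|B\otimes G/B\otimes G/G'|\leq p^{(n-t)^2}$ by "noting $G/G'$ is a quotient of $G/B$ of rank at most $n-t$." But $G/G'$ is a quotient of $G/B$ only when $B\subseteq G'$, and its rank can be as large as $n-2$, exceeding $n-t$ when $t>2$; the bound one actually gets from $|B\otimes G/B|\leq p^{n-t}$ and $\operatorname{rank}(G/G')\leq n-2$ is $p^{(n-t)(n-2)}$. This is repairable --- the corrected exponent still loses to the cubic deficit, and in any case you may always take $t=2$, where $(n-t)^2=(n-t)(n-2)$ and your argument collapses to the paper's proof verbatim --- but as written the estimate for $t>2$ rests on a false claim and should be fixed or the extra generality dropped.
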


\begin{proof}
Choose $B\subseteq Z(G)$ cyclic of order $p^2$ and use Theorem \ref{our} to obtain \[|\mathcal{M}^{(2)}(G)|\leq |\mathcal{M}^{(2)}\big(G/B\big)||B\otimes G/B\otimes G/G'|.\] Since \[|\mathcal{M}^{(2)}\big(G/B\big)|\leq p^{\frac13(n-1)(n-2)(n-3)} ~\text{and} ~|B\otimes G/B\otimes G/G'|\leq p^{(n-2)^2},\]  $|\mathcal{M}^{(2)}(G)|\leq p^{\frac13n(n-1)(n-2)-2},$
the result follows.\end{proof}

In the class of groups with an elementary abelian center we must consider the following two cases.

\begin{lem}\label{lw}
Let $G$ be a $p$-group of order $p^n$ with $G'$ of order $p^2$. Let $Z(G)$ be elementary abelian. If $|Z(G)|\geq p^3$ or $|Z(G)|=p^2$ and $G'\neq Z(G)$ then $|\mathcal{M}^{(2)}(G)|\leq p^{\frac13n(n-1)(n-2)-2}.$
\end{lem}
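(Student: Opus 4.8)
The plan is to apply Theorem \ref{our} with a cyclic central subgroup $B$ of order $p$ chosen so that $B \cap G' = 1$. Since $Z(G)$ is elementary abelian, every cyclic central subgroup has order $p$, so the task is to locate a central element of order $p$ lying outside $G'$. When $|Z(G)| \ge p^3$ this is automatic: $Z(G) \cap G'$ has order at most $|G'| = p^2 < |Z(G)|$, hence $Z(G) \not\subseteq G'$ and we may pick $z \in Z(G) \setminus G'$. When $|Z(G)| = p^2$ with $G' \ne Z(G)$, if $Z(G) \subseteq G'$ held then $Z(G) = G'$ by equality of orders, contradicting $G' \ne Z(G)$; so again $Z(G) \not\subseteq G'$ and a suitable $z$ exists. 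Setting $B = \langle z \rangle$, the fact that $z$ has order $p$ and $z \notin G'$ forces $B \cap G' = 1$.

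With this choice $(G/B)' = G'B/B \cong G'$ still has order $p^2$, so $G/B$ is a non-abelian group of order $p^{n-1}$ whose derived subgroup has order $p^2$; Lemma \ref{m=2} applied to $G/B$ then yields $|\mathcal{M}^{(2)}(G/B)| \le p^{\frac13 (n-1)(n-2)(n-3)+1}$. For the tensor factor in Theorem \ref{our}, since $B \cong \mathbb{Z}_p$ the group $B \otimes (G/B) \otimes (G/G')$ is elementary abelian of order $p^{d\,d'}$, where $d$ is the minimal number of generators of $(G/B)^{ab} = G/G'B$ and $d'$ that of $G/G'$. Because $B \cap G' = 1$ gives $|G'B| = p^3$, we have $d \le n-3$ and $d' \le n-2$, whence $|B \otimes (G/B) \otimes (G/G')| \le p^{(n-3)(n-2)}$.

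Combining these in Theorem \ref{our} gives $|\mathcal{M}^{(2)}(G)| \le p^{\frac13(n-1)(n-2)(n-3)+1+(n-3)(n-2)}$, and comparing with the target exponent, the difference $\frac13 n(n-1)(n-2) - 2 - \bigl(\frac13(n-1)(n-2)(n-3)+1+(n-3)(n-2)\bigr)$ simplifies to $2n-7$, which is non-negative for all $n \ge 4$. Since both hypotheses force $n \ge 4$ (indeed $n \ge 5$ once $|Z(G)| \ge p^3$), the claimed bound follows. The crux of the argument — and the only place the precise hypotheses enter — is the existence of a central subgroup of order $p$ meeting $G'$ trivially: this is exactly what simultaneously keeps $(G/B)'$ of order $p^2$ (so that the sharper Lemma \ref{m=2} bound applies rather than the weaker estimate for derived subgroups of order $p$, which would cost $2$ in the exponent) and cuts the rank of $(G/B)^{ab}$ down to $n-3$. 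These two economies together produce the gain of $2$ in the exponent, and I expect verifying that no weaker choice of $B$ suffices (a choice with $B \subseteq G'$ would only give the bound for $n \ge 7$) to be the conceptual obstacle rather than the routine final arithmetic.
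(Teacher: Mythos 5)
Your proof is correct and takes essentially the same route as the paper: the paper likewise picks a central subgroup $K$ of order $p$ with $K\cap G'=1$ (so that $(G/K)'$ still has order $p^2$), bounds $|\mathcal{M}^{(2)}(G/K)|\leq p^{\frac13(n-1)(n-2)(n-3)+1}$ by the order-$p^2$ derived subgroup estimate of Lemma \ref{m=2} (miscited in the paper as Lemma \ref{cs}), and then bounds the tensor factor, using Lemma \ref{p1}(a) where you use Theorem \ref{our}. Your only deviation is the marginally sharper tensor bound $p^{(n-3)(n-2)}$ via $(G/B)^{ab}\cong G/G'B$ in place of the paper's $p^{(n-2)^2}$, giving slack $2n-7$ rather than $n-5$; both suffice, since the hypotheses force $n\geq 5$.
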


\begin{proof}
There exists a central subgroup $K$ of order $p$ with $K\cap G'=1$. Using Lemma \ref{p1} (a), we have $|\mathcal{M}^{(2)}(G)|\leq |\mathcal{M}^{(2)}\big(G/K\big)||K\otimes G/\gamma_3(G)\otimes G/\gamma_3(G)|$. But $G/K$ is a non abelian $p$-group with $|\big(G/K\big)'|=p^2$ so $|\mathcal{M}^{(2)}\big(G/K\big)|\leq p^{\frac13(n-1)(n-2)(n-3)+1}$ by using Lemma \ref{cs}. Since $|K\otimes G/\gamma_3(G)\otimes G/\gamma_3(G)|\leq p^{(n-2)^2}$, the result  follows.
\end{proof}
\begin{lem}\label{ell}
Let $G$ be a $p$-group of order $p^n$ with $G'$ of order $p^2$. If $G/G'$ is not elementary, then $|\mathcal{M}^{(2)}(G)|=p^{\frac13n(n-1)(n-2)-2}$. 
\end{lem}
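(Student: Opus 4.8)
The plan is to obtain the value by sandwiching $|\mathcal{M}^{(2)}(G)|$ between matching upper and lower bounds, both equal to $p^{\f13n(n-1)(n-2)-2}$. First note that the hypothesis forces $n\geq 5$: since $G$ is a nonabelian finite $p$-group, $G^{ab}=G/G'$ is noncyclic (a cyclic abelianization would make $G$ cyclic), and being non-elementary abelian it has a cyclic factor of order at least $p^2$, so $|G^{ab}|=p^{n-2}\geq p^3$. Because $G$ is a $p$-group we have $Z(G)\cap G'\neq 1$, so I fix once and for all a central subgroup $B\leq G'$ with $|B|=p$; both halves of the argument will be run through this $B$.

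For the upper bound I would reuse the reduction behind Lemma \ref{nl}. As $B\subseteq G'$ is central of order $p$, the quotient $G/B$ has order $p^{n-1}$, derived subgroup $(G/B)'=G'/B$ of order $p$, and abelianization $(G/B)^{ab}=G/G'=G^{ab}$, which is again non-elementary abelian; hence Lemma \ref{nl} governs $|\mathcal{M}^{(2)}(G/B)|$. Feeding this into Theorem \ref{our},
\[|\mathcal{M}^{(2)}(G)|\leq |\mathcal{M}^{(2)}(G/B)|\,\big|B\otimes G/B\otimes G/G'\big|,\]
and evaluating the tensor factor through the abelianizations (tensoring with $B\cong\mathbb{Z}_p$ reduces each slot modulo $p$, so its order is a power of $p$ read off from the rank of $G^{ab}$) together with Lemma \ref{p4}(ii) for $G^{ab}$, an exponent count gives $|\mathcal{M}^{(2)}(G)|\leq p^{\f13n(n-1)(n-2)-2}$.

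The real content, and the step I expect to be the main obstacle, is the reverse inequality $|\mathcal{M}^{(2)}(G)|\geq p^{\f13n(n-1)(n-2)-2}$. For this I would exploit the right-exact sequence of Lemma \ref{p1}(b) for the same $B$, namely $(B\wedge G)\wedge G\to\mathcal{M}^{(2)}(G)\to\mathcal{M}^{(2)}(G/B)\to B\cap\gamma_3(G)\to 1$, which forces $|\mathcal{M}^{(2)}(G)|\geq |\mathcal{M}^{(2)}(G/B)|/|B\cap\gamma_3(G)|$ and lets me add back the contribution of the kernel $(B\wedge G)\wedge G$; the divisibility in Lemma \ref{p1}(ii) would be used in tandem. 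To convert these into the asserted equality I would need each estimate feeding the upper bound to be sharp: that $G^{ab}$ meets the extremal value in Lemma \ref{p4}(ii), that the multilinear map $\psi$ constructed in the proof of Theorem \ref{our} is injective, and that the connecting map onto $B\cap\gamma_3(G)$ is as large as possible. Proving that these sharpness conditions hold simultaneously, so that the two bounds collapse onto $p^{\f13n(n-1)(n-2)-2}$, is the delicate point; this is precisely where the hypothesis that $G/G'$ is non-elementary abelian must be used as an exact structural constraint rather than as a mere inequality, and I anticipate it is the hardest part of the argument.
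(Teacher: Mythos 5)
Your upper-bound half is correct and is essentially what the paper's one-line proof amounts to: the paper just says the result follows ``by a similar way used in the proof of Lemma \ref{nl} and Theorems \ref{2.8} and \ref{2.9}'', i.e., pass to $G/B$ for a central $B\leq G'$ of order $p$ (which exists since $G'\cap Z(G)\neq 1$), note that $(G/B)'=G'/B$ has order $p$ while $(G/B)^{ab}\cong G/G'$ remains non-elementary, and combine Theorem \ref{our} with Lemma \ref{nl} --- exactly your reduction, including your (correct) observation that $n\geq 5$.

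The second half, the matching lower bound, is a genuine gap, and not one you could have closed: the equality as printed is false, and the lemma can only be true with $\leq$ in place of $=$ (in line with Lemma \ref{nl}, whose conclusion is an inequality). Your own reduction already shows this: Lemma \ref{nl} applied to $G/B$ gives $|\mathcal{M}^{(2)}(G/B)|\leq p^{\frac13(n-1)(n-2)(n-3)-2}$, and even with the crude estimate $|B\otimes G/B\otimes G/G'|\leq p^{(n-2)^2}$ Theorem \ref{our} yields
\[|\mathcal{M}^{(2)}(G)|\leq p^{\frac13(n-1)(n-2)(n-3)-2+(n-2)^2}=p^{\frac13 n(n-1)(n-2)-2-(n-2)},\]
strictly below the claimed value by a factor of at least $p^{\,n-2}$; no sharpness of $\psi$, of Lemma \ref{p4}(ii), or of the sequence in Lemma \ref{p1}(b) can lift $|\mathcal{M}^{(2)}(G)|$ above an upper bound that already sits under the target. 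A concrete counterexample: let $M$ be of maximal class of order $p^4$ (e.g.\ $D_{16}$ for $p=2$), so $|M'|=p^2$ and $M^{ab}\cong\mathbb{Z}_p^{(2)}$, and set $G=M\times\mathbb{Z}_{p^2}$. Then $n=6$, $|G'|=p^2$, and $G/G'\cong\mathbb{Z}_p^{(2)}\oplus\mathbb{Z}_{p^2}$ is not elementary abelian, yet Lemma \ref{p2} (with $\mathcal{M}^{(2)}(\mathbb{Z}_{p^2})=1$, the Baer invariants of cyclic groups being trivial) gives $|\mathcal{M}^{(2)}(G)|=|\mathcal{M}^{(2)}(M)|\cdot p^{4}\cdot p^{2}\leq p^{11}\cdot p^{6}=p^{17}$, nowhere near the claimed $p^{38}$. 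Since every later use of Lemma \ref{ell} in the paper (ruling out non-elementary $G/G'$ in the theorem on $G'=Z(G)=\mathbb{Z}_p\oplus\mathbb{Z}_p$) needs only the inequality, your upper-bound argument in fact proves everything the paper actually uses; the flaw in your proposal is that, rather than planning a delicate sharpness analysis for the reverse inequality, you should have carried your own exponent count to the end and concluded that the stated equality cannot hold.
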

\begin{proof}
The result is obtained by a similar way used in the proof of Lemma \ref{nl} and Theorem \ref{2.8} and \ref{2.9}.
\end{proof}
The next lemma shows the same upper bound in Lemma \ref{lw} works when $Z(G)$ is of order $p$.

\begin{lem}
Let $G$ be a $p$-group of order $p^n$ with $G'$ of order $p^2$. If $|Z(G)|=p$ then $|\mathcal{M}^{(2)}(G)|\leq p^{\frac13n(n-1)(n-2)-2}.$
\end{lem}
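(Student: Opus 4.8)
The plan is to follow the template of Lemma~\ref{lw}, but since the unique minimal central subgroup now sits inside the derived subgroup, the reduction must be carried out modulo $\gamma_3(G)$ rather than by splitting off a central complement. First I would record the structural consequences of the hypotheses. Because $|G'|=p^2$ and $|Z(G)|=p$, the group cannot be of class $2$ (otherwise $G'\subseteq Z(G)$ would force $p^2\le p$), so $\gamma_3(G)\neq 1$; as $\gamma_3(G)\subsetneq G'$ we get $|\gamma_3(G)|=p$ and hence $\gamma_4(G)=1$, i.e. $G$ has class exactly $3$. Then $\gamma_3(G)\subseteq Z(G)$ together with $|Z(G)|=p$ gives $\gamma_3(G)=Z(G)$, and of course $\gamma_3(G)\subseteq G'$. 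By Lemma~\ref{ell} I may assume $G/G'$ is elementary abelian; consequently $G^p\subseteq G'$, so $\gamma_3(G)=Z(G)$ is the \emph{only} central subgroup of order $p$ and no central complement to $G'$ exists, which is exactly why the argument of Lemma~\ref{lw} cannot be reused verbatim.

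Next I would set $B=\gamma_3(G)=Z(G)$ and apply Lemma~\ref{p1}(i)(a). Since $B\subseteq Z(G)\subseteq Z_2(G)$ and $B\cap\gamma_3(G)=B$ has order $p$, this yields
\[p\,|\mathcal{M}^{(2)}(G)|\ \le\ |\mathcal{M}^{(2)}(G/\gamma_3(G))|\,\big|(B\otimes G/\gamma_3(G))\otimes G/\gamma_3(G)\big|.\]
Because $G/G'$ is elementary abelian of rank $n-2$ and it is the abelianization of $G/\gamma_3(G)$, the tensor factor has order at most $p^{(n-2)^2}$. It then remains to estimate $|\mathcal{M}^{(2)}(G/\gamma_3(G))|$. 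The quotient $\bar G=G/\gamma_3(G)$ has order $p^{n-1}$ and satisfies $\bar G'=\phi(\bar G)=G'/\gamma_3(G)$ of order $p$ with elementary abelian abelianization; by the classification preceding Theorem~\ref{2.8} it is therefore $H\times T$ with $H$ extra-special and $T$ elementary abelian, so Theorem~\ref{2.8} evaluates $|\mathcal{M}^{(2)}(\bar G)|$ exactly.

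Feeding Theorem~\ref{2.8} into the displayed inequality, the generic case $H\not\cong E_1$ gives exponent at most $\tfrac13(n-1)(n-2)(n-3)+(n-2)^2-1$, and a short calculation shows this is $\le\tfrac13 n(n-1)(n-2)-2$ for every $n\ge 3$; the borderline case $H\cong D_8$ closes likewise for $n\ge 4$. Since $|G'|=p^2$ forces $n\ge 4$, all of these are covered. The main obstacle is the case $H\cong E_1$: here Theorem~\ref{2.8}(i) only yields $|\mathcal{M}^{(2)}(\bar G)|=p^{\frac13(n-1)(n-2)(n-3)+3}$, and the same computation closes the bound only once $n\ge 6$. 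This configuration genuinely occurs — for instance the maximal-class group of order $p^4$ in which $[x,y,x]$ generates $\gamma_3(G)$ while $[x,y,y]=1$ has $G/\gamma_3(G)\cong E_1$ — so it cannot be excluded by a structural argument. I would therefore dispatch the finitely many exceptional groups, namely those with $n\in\{4,5\}$ and $G/\gamma_3(G)\cong E_1\times T$, by a direct computation of $\mathcal{M}^{(2)}(G)$ from a free presentation, using the exact sequence of Lemma~\ref{p1}(i)(b) to pin down the first term; this residual hand computation is where the real work of the proof lies.
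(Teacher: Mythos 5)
Your reduction is essentially the paper's: the published proof consists of exactly your main step, namely applying Lemma~\ref{p1}(i)(a) with $B=Z(G)$ --- which, as you observe, equals $\gamma_3(G)$ because the hypotheses force class exactly $3$ --- and then invoking Theorems~\ref{2.8} and~\ref{2.9} to bound $|\mathcal{M}^{(2)}(G/Z(G))|$. Your bookkeeping on the quotient is correct up to one small slip: $\bar G=G/\gamma_3(G)$ need not have the form $H\times T$, since the classification preceding Theorem~\ref{2.8} also allows $\bar G\cong H\cdot\mathbb{Z}_{p^2}\times T$; but that case is covered by Theorem~\ref{cs} with the value $p^{\frac13(n-1)(n-2)(n-3)}$, so it closes for all relevant $n$ exactly like your generic case. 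Your arithmetic in the remaining cases is right: the inequality
\[\tfrac13(n-1)(n-2)(n-3)+3+(n-2)^2-1\ \le\ \tfrac13 n(n-1)(n-2)-2\]
needed when $\bar G\cong E_1\times T$ holds only for $n\ge 6$, and this configuration genuinely occurs (the exponent-$p$ maximal class groups of order $p^4$, $p\ge 5$, have $G/\gamma_3(G)\cong E_1$).

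The gap is that your proposal stops there: the exceptional groups with $n\in\{4,5\}$ and $G/\gamma_3(G)\cong E_1\times T$ are only promised to ``a direct computation'' via Lemma~\ref{p1}(i)(b) or a free presentation, which is never carried out, so for exactly those groups the stated bound remains unproved --- and these are the hard cases, since none of the general estimates available (Theorem~\ref{1}, Lemma~\ref{m=2}) gets below $p^{\frac13 n(n-1)(n-2)+1}$ there. To be fair, you have put your finger on a real weakness: the paper's own one-line proof cites the same ingredients and asserts that ``the result follows,'' silently feeding in the worst-case value $p^{\frac13(n-1)(n-2)(n-3)+3}$ from Theorem~\ref{2.8}(i) without checking the resulting inequality for small $n$, so the published argument is incomplete in precisely the case you isolated. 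But diagnosing where the work lies is not doing it: to complete the proof you must either compute $\mathcal{M}^{(2)}$ of the finitely many residual groups explicitly, or exclude them structurally (for instance by a capability/$Z_2^{*}$ argument restricting which quotients $E_1\times T$ can occur as $G/Z(G)$ under the constraints $Z(G)=\gamma_3(G)\subseteq G'$ and $|G'|=p^2$), and neither step appears in the proposal.
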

\begin{proof}
By using Lemma \ref{p1} (a), when $B=Z(G)$ and Theorems \ref{2.8} and \ref{2.9}, the result follows. 
\end{proof}

The last case is the one for which $G'=Z(G)=\mathbb{Z}_p\oplus \mathbb{Z}_p$.

\begin{thm} There is no finite
 $p$-group of order $p^n$ with $G'=Z(G)=\mathbb{Z}_p\oplus \mathbb{Z}_p$ such that $|\mathcal{M}^{(2)}(G)|=p^{\frac13n(n-1)(n-2)}.$
\end{thm}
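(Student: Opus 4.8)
The plan is to prove the sharper statement that every such $G$ satisfies the strict inequality $|\mathcal{M}^{(2)}(G)|<p^{\frac13n(n-1)(n-2)}$, so that equality can never occur. First I would record the structural consequences of the hypothesis: since $G'=Z(G)$ we have $\gamma_3(G)=[G',G]\subseteq[Z(G),G]=1$, so $G$ is of class $2$ and, writing $G=F/R$ for a free presentation, $\gamma_3(F)\subseteq R$; moreover the commutator form $G/G'\times G/G'\to G'$ is alternating with values in $\mathbb{Z}_p^2$, which forces $\dim_{\mathbb{F}_p}(G/G')\ge 3$ and hence $n\ge 5$. If $G/G'$ is not elementary abelian, Lemma \ref{ell} already gives $|\mathcal{M}^{(2)}(G)|=p^{\frac13n(n-1)(n-2)-2}$, so I may assume $G/G'$ elementary abelian. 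Now fix a cyclic central subgroup $B=\langle z\rangle\subseteq G'$ of order $p$. Then $G/B$ has order $p^{n-1}$, its derived subgroup $G'/B$ has order $p$, and its abelianization $(G/B)^{ab}=G/G'$ is elementary abelian; thus $G/B$ is one of the groups treated in Theorems \ref{2.8}, \ref{2.9} and \ref{cs}, and in particular $|\mathcal{M}^{(2)}(G/B)|\le p^{\frac13(n-1)(n-2)(n-3)+3}$, the maximal value being attained only for $E_1\times\mathbb{Z}_p^{\,n-4}$.

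For $n\ge 6$ a crude estimate suffices. I would apply Theorem \ref{our} with this $B$: since $(G/B)^{ab}=G/G'$ has rank $n-2$, one has $|B\otimes G/B\otimes G/G'|=p^{(n-2)^2}$, whence
\[
|\mathcal{M}^{(2)}(G)|\le|\mathcal{M}^{(2)}(G/B)|\,p^{(n-2)^2}\le p^{\frac13(n-1)(n-2)(n-3)+3+(n-2)^2}.
\]
A short simplification rewrites the exponent as $\frac13n(n-1)(n-2)-n+5$, which for $n\ge 6$ is at most $\frac13n(n-1)(n-2)-1$. Hence $|\mathcal{M}^{(2)}(G)|<p^{\frac13n(n-1)(n-2)}$, and no group of order $p^n$ with $n\ge 6$ realizes the claimed value.

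The remaining and genuinely delicate case is $n=5$, where the estimate above only yields $|\mathcal{M}^{(2)}(G)|\le p^{20}=p^{\frac13n(n-1)(n-2)}$ and I must improve it by one power of $p$. Here I would return to the proof of Theorem \ref{our}, in which $|\mathcal{M}^{(2)}(G)|=|\mathcal{M}^{(2)}(G/B)|\,|[S,F,F]/[R,F,F]|$ and the map $\psi\colon B\otimes(G/B)^{ab}\otimes G/G'\to[S,F,F]/[R,F,F]$ is a surjection from a group of order $p^9$. The point is to show $\psi$ is \emph{not} injective. Since $\dim(G/G')=3$ and the radical of the commutator form is trivial (because $Z(G)=G'$), that form has a one-dimensional, necessarily decomposable, kernel; choosing coordinates I may take a basis $x_1,x_2,x_3$ of $G/G'$ with $[x_1,x_2]=1$ and $z:=[x_1,x_3]\ne1$, and set $B=\langle z\rangle$. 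Writing $u=[x_1,x_3]\in\gamma_2(F)$, the Hall--Witt identity read modulo $[R,F,F]$ (legitimate because $\gamma_3(F)\subseteq R$ forces $\gamma_5(F)=[\gamma_3(F),F,F]\subseteq[R,F,F]$, absorbing all conjugation and higher-weight corrections) gives $[u,x_1,x_2]\equiv[u,x_2,x_1]$, the obstruction term $[[x_1,x_2],u]$ lying in $[[F,F],R]\subseteq[R,F,F]$ by the Three Subgroup Lemma together with $[x_1,x_2]\in R$. Consequently $\psi(z\otimes\bar x_1\otimes\bar x_2)=\psi(z\otimes\bar x_2\otimes\bar x_1)$, these being distinct basis elements of the domain, so $\ker\psi\ne1$ and $|[S,F,F]/[R,F,F]|\le p^8$. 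Combined with $|\mathcal{M}^{(2)}(G/B)|\le p^{11}$ this gives $|\mathcal{M}^{(2)}(G)|\le p^{19}<p^{20}$, completing the argument.

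I expect the main obstacle to be precisely this $n=5$ boundary: for $n\ge6$ the elementary order estimate already forces a strict inequality, but at $n=5$ it is sharp, and one must extract the extra factor of $p$ by proving that $\psi$ genuinely collapses, i.e. that the Jacobi relation forced jointly by $\gamma_3(G)=1$ and the vanishing commutator $[x_1,x_2]=1$ produces a nontrivial element of $\ker\psi$. Checking that the Hall--Witt and Three Subgroup manipulations are valid modulo $[R,F,F]$, and not merely modulo $[R,F]$, is the one step demanding care.
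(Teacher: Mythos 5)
Your proof is correct, and up to the reduction it runs parallel to the paper's: both arguments pass to the quotient by a central subgroup of order $p$ inside $G'$ (you via Theorem \ref{our}, the paper via Lemma \ref{p1}(i)(a), which give the same numerical bound here since $\gamma_3(G)=1$), both invoke Lemma \ref{ell} to force $G/G'$ elementary abelian, both bound $|\mathcal{M}^{(2)}(G/B)|$ by $p^{\frac13(n-1)(n-2)(n-3)+3}$ through the classification in Theorems \ref{2.8}, \ref{2.9} and \ref{cs}, and both conclude $n\le 5$ and $n\neq 4$ (you actually justify $n\neq 4$, via the observation that a rank-$2$ central quotient would make $G'$ cyclic; the paper merely asserts it). The genuine divergence is the endgame at $n=5$, which is the only place the bound is tight: the paper settles it by citing the classification of \cite[page 345]{ni0} to pin down $G\cong\mathbb{Z}_p^{(4)}\rtimes\mathbb{Z}_p$ and then computing $|\mathcal{M}^{(2)}(G)|=p^{18}$ by the method of \cite[Theorem 3.5]{ni3}, whereas you stay inside the machinery of Theorem \ref{our} and show its epimorphism $\psi$ is not injective: since the commutator map $\Lambda^2(G/G')\to G'$ is onto from a $3$-dimensional space, its kernel is a line spanned by a decomposable $\bar x_1\wedge\bar x_2$ (note that the one-dimensionality follows from this surjectivity by dimension count; the triviality of the radical, which you invoke, is true but not what is needed), and the Jacobi identity read modulo $[R,F,F]$ --- legitimate because $\gamma_3(F)\subseteq R$ yields $\gamma_5(F)=[\gamma_3(F),F,F]\subseteq[R,F,F]$, while the obstruction $[u,[f_1,f_2]]$ lies in $[F',R]\subseteq[R,F,F]$ by the Three Subgroup Lemma since $[f_1,f_2]\in R$ --- identifies the distinct basis tensors $z\otimes\bar x_1\otimes\bar x_2$ and $z\otimes\bar x_2\otimes\bar x_1$ in the domain of order $p^9$, cutting the bound to $p^{11}\cdot p^{8}=p^{19}<p^{20}$. (Your $u=[x_1,x_3]\in\gamma_2(F)$ should strictly read $u=[f_1,f_3]$ for lifts $f_i$ of the $x_i$, but the intent is clear and the argument survives.) Both routes are sound; yours buys self-containedness and uniformity, since it needs no classification of the groups of order $p^5$ with $G'=Z(G)\cong\mathbb{Z}_p\oplus\mathbb{Z}_p$ --- a family with more than one isomorphism type, so the paper's reduction to the single group $\mathbb{Z}_p^{(4)}\rtimes\mathbb{Z}_p$ is in fact its most fragile step --- at the price of the Hall--Witt bookkeeping modulo $[R,F,F]$, which you rightly flag as the delicate point and carry out correctly; the paper's route buys brevity and the sharper value $p^{18}$ for the putative extremal group.
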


\begin{proof}
By contrary, let there be a finite $p$-group $G$ of order $p^n$ such that $|\mathcal{M}^{(2)}(G)|=p^{\frac13n(n-1)(n-2)}$ and  $G'=Z(G)=\mathbb{Z}_p\oplus \mathbb{Z}_p$.   Let $K$ be a central subgroup of order $p$ in $G'$ and using  Lemma \ref{p1}(a) we have
$|\mathcal{M}^{(2)}(G)|\leq |\mathcal{M}^{(2)}\big(G/K\big)||K\otimes G/G'\otimes G/G')|$. 
Now Theorems \ref{2.8} and \ref{2.9} shows that $|\mathcal{M}^{(2)}\big(G/K\big)|\leq p^{\frac13(n-1)(n-2)(n-3)+3} $ and also $G/G'$ is elementary abelian by using Lemma \ref{ell}. Hence $p^{\frac13n(n-1)(n-2)}=|\mathcal{M}^{(2)}(G)|\leq p^{\frac13(n-1)(n-2)(n-3)+3}p^{(n-2)^2}$. Hence $n\leq 5$. Since $n\neq 4$, we have $n=5$. Now \cite[page 345]{ni0} shows $G\cong \mathbb{Z}_p^{(4)}\rtimes  \mathbb{Z}_p$
By a same way used in the proof \cite[Theorem 3.5]{ni3}, we have $|\mathcal{M}^{(2)}(G)|=p^{18}$, which is a contradiction. Hence the assumption is false and the result follows.
\end{proof}

Now we summarize the results as follows.

\begin{thm}
Let $G$ be a non abelian $p$-group of order $p^n$. Then
\begin{itemize}
\item[(i)] there is no group $G$ with $|\mathcal{M}^{(2)}(G)|=p^{\frac13n(n-1)(n-2)+2}$.
\item[(ii)] $|\mathcal{M}^{(2)}(G)|=p^{\frac13n(n-1)(n-2)+1}$ if and only if $p=2$ and $G\cong D_8 \times \mathbb{Z}_2^{(n-3)}.$
\item[(iii)] $|\mathcal{M}^{(2)}(G)|=p^{\frac13n(n-1)(n-2)}$ if and only if $G\cong H_m\times \mathbb{Z}_p^{(n-2m-1)}$, in where $H_m$ is an extraspecial $p$-groups of order $p^{2m+1}$ and $m\geq 2$ or $G\cong H_m\cdot\mathbb{Z}_{p^2}\times \mathbb{Z}_{p}^{(n-2m-2)}$. 
\end{itemize}
\end{thm}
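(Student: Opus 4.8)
The plan is to obtain all three statements by collecting the results of this section and re-indexing them by the value of $s_2(G)$, recalling that $|\mathcal{M}^{(2)}(G)|=p^{\frac13 n(n-1)(n-2)+3-s_2(G)}$; thus (i), (ii), (iii) are precisely the assertions that $s_2(G)=1$ never occurs, that $s_2(G)=2$ forces $G\cong D_8\times\mathbb{Z}_2^{(n-3)}$, and that $s_2(G)=3$ is realised exactly by the two listed families. Since every exponent named in (i)--(iii) is at least $\frac13 n(n-1)(n-2)$, I would first discard the groups whose multiplier is provably smaller. Lemma \ref{m>=3} removes every $G$ with $|G'|\geq p^3$, for which the exponent drops to at most $\frac13 n(n-1)(n-2)-2$; and Lemma \ref{m=2} shows that $|G'|=p^2$ can only be relevant to (ii) or (iii), the exponent there being at most $\frac13 n(n-1)(n-2)+1$. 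So the problem reduces to the two cases $|G'|=p$ and $|G'|=p^2$.

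The case $|G'|=p$ supplies every group that actually appears. By Lemma \ref{nl} I may assume $G/G'$ elementary abelian, for otherwise the exponent is again at most $\frac13 n(n-1)(n-2)-2$; the structure lemma for $G'=\phi(G)$ of order $p$ then presents $G$ as $H\times T$ or as $H\cdot\mathbb{Z}_{p^2}\times T$ with $T$ elementary abelian and $H$ extra-special. Theorems \ref{2.8}, \ref{2.9} and \ref{cs} evaluate the multiplier in these shapes; writing the exponent as $\frac13 n(n-1)(n-2)$ plus a surplus, $H=E_1$ gives surplus $+3$ (the case $s_2=0$, already settled), $H=D_8$ gives surplus $+1$, every other extra-special $H$ gives surplus $+0$, and $H\cdot\mathbb{Z}_{p^2}\times T$ gives surplus $+0$ as well. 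This yields the ``if'' halves of (ii) and (iii) and, decisively, shows that inside $|G'|=p$ the realised surplus jumps from $+1$ (attained only by $D_8$) straight to $+3$ (attained only by $E_1$), so no group here reaches surplus $+2$.

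It remains to show that $|G'|=p^2$ contributes nothing to the range, which is exactly what the last batch of lemmas of the section does. I would go through them in turn---$Z(G)$ not elementary abelian, then Lemma \ref{lw} ($Z(G)$ elementary abelian with $|Z(G)|\geq p^3$, or $|Z(G)|=p^2$ and $G'\neq Z(G)$), then the case $|Z(G)|=p$, together with Lemma \ref{ell} for non-elementary $G/G'$---each of which pushes the exponent down to at most $\frac13 n(n-1)(n-2)-2$, i.e. $s_2(G)\geq 5$. The one delicate case is $G'=Z(G)=\mathbb{Z}_p\oplus\mathbb{Z}_p$: here the final theorem uses Lemma \ref{p1}(a) with Theorems \ref{2.8} and \ref{2.9} to force $n\leq 5$, excludes $n=4$ because a two-generated central quotient can only give a cyclic commutator subgroup, and for $n=5$ identifies the single surviving group $\mathbb{Z}_p^{(4)}\rtimes\mathbb{Z}_p$, whose multiplier is computed to be $p^{18}$, well below $p^{20}$. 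Running the same estimate with surplus $+1$ in place of $+0$ forces $n\leq 4$ and so kills $s_2(G)=2$ here too; hence $|G'|=p^2$ meets none of (i)--(iii).

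Assembling the two cases proves the theorem: surplus $+2$ is attained by no group, giving (i); surplus $+1$ only by $D_8\times\mathbb{Z}_2^{(n-3)}$, giving (ii); and surplus $+0$ exactly by the extra-special and central-product families, giving (iii), the ``only if'' directions being just the statement that no other structural type survives the reductions. I expect the main obstacle to be bookkeeping rather than a new idea: one must place the order-$p^3$ extra-special groups correctly, since by Lemma \ref{p3}(ii),(iii) both the exponent-$p^2$ group of order $p^3$ and $Q_8$ give surplus $+0$ and therefore belong to part (iii), and one must verify that the borderline family $G'=Z(G)=\mathbb{Z}_p\oplus\mathbb{Z}_p$ really fails at surplus $+1$ and not merely at surplus $+0$, the latter being the only case the quoted theorem treats explicitly.
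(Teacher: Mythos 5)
Your proposal is correct and takes essentially the same route as the paper, which states this theorem purely as a summary of the section's results without a separate proof: reduce to $|G'|\leq p^2$ via Lemmas \ref{m>=3} and \ref{m=2}, read off the realised surpluses $+3,+1,0$ from Theorems \ref{2.8}, \ref{2.9} and \ref{cs} in the $|G'|=p$ case, and eliminate $|G'|=p^2$ through the final batch of lemmas. You are in fact slightly more careful than the source on two points it glosses over: the penultimate theorem only excludes surplus $0$ in the case $G'=Z(G)=\mathbb{Z}_p\oplus\mathbb{Z}_p$, so your rerun of the same estimate at surplus $+1$ (forcing $n\leq 4$ and then killing $n=4$ because a two-generated central quotient yields a cyclic derived subgroup) is genuinely needed for part (ii), and your observation that $Q_8\times\mathbb{Z}_2^{(n-3)}$ and the exponent-$p^2$ extraspecial group of order $p^3$ times $\mathbb{Z}_p^{(n-3)}$ also attain surplus $0$ by Theorem \ref{2.8}(iii) is a real bookkeeping point that the stated restriction $m\geq 2$ in part (iii) fails to accommodate.
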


\end{document}